\def\tr{\mathrm{tr\,}}
\def\p{{\rm p}}
\def\I{{\rm I}}
\def\d{{\rm d}}
\def\v{{\bf v}}
\def\w{{\bf w}}
\def\vstrut{\phantom{\biggm|}}
\def\diag#1{{{\rm diag}(#1)}}
     \def\section{\@startsection{section}{1}%
     \z@{.7\linespacing\@plus\linespacing}{.5\linespacing}%
     {\bfseries
     \centering
     }}
     \def\@secnumfont{\bfseries}
\newtheorem{theorem}{Theorem}[section]
\newtheorem{lemma}[theorem]{Lemma}
\newtheorem{proposition}[theorem]{Proposition}
\newtheorem{corollary}[theorem]{Corollary}
\theoremstyle{definition}
\newtheorem{definition}[theorem]{Definition}
\newtheorem{example}[theorem]{Example}
\theoremstyle{remark}
\newtheorem{remark}[theorem]{Remark}
\numberwithin{equation}{section}
\begin{document}

\title{Krawtchouk-Griffiths Systems I: Matrix Approach}
\author{Philip Feinsilver}
\address{Department of Mathematics\\
Southern Illinois University, Carbondale, Illinois 62901, U.S.A.}
\email{phfeins@siu.edu}
\subjclass[2010] {Primary 15A69, 15A72, 42C05; Secondary 20H20, 22E60}

\keywords{orthogonal polynomials, multivariate polynomials, Krawtchouk polynomials, symmetric tensor powers, symmetric representation,
discrete quantum systems, Kravchuk matrices, multinomial distribution}

 \begin{abstract}
We call \textsl{Krawtchouk-Griffiths} systems, or KG-systems, systems of multivariate polynomials orthogonal with respect
to corresponding multinomial distributions. The original Krawtchouk polynomials	are orthogonal with respect to a binomial distribution.
Our approach is to work directly with matrices comprising the values of the polynomials at points of a discrete grid based on the possible
counting values of the underlying multinomial distribution. The starting point for the construction of a KG-system is a \textsl{generating matrix} satisfying the
K-condition, orthogonality with respect to the basic probability distribution associated to an individual step of the multinomial process.
The variables of the polynomials corresponding to matrices may be interpreted as quantum observables in the real case, or
quantum variables in the complex case. The structure of the recurrence relations for the orthogonal polynomials	 is presented
with multiplication operators as the matrices corresponding to the quantum variables. 
An interesting feature is that the associated random walks correspond
to the Lie algebra of the representation of symmetric tensor powers of matrices.
\end{abstract}

\maketitle

\begin{section}{Introduction}
The original paper of Krawtchouk \cite{Kra} presents polynomials orthogonal with respect to a general
binomial distribution and discusses the connection with Hermite polynomials. Group-theoretic approaches
were presented by Koornwinder \cite{K}. We developed an approach
\cite{FK} using matrices, observed as well in the work of N.\, Bose \cite{B}, with entries the values of Krawtchouk polynomials
on their domain. Matrix constructions for the multivariate case basic to this work were presented in \cite{FS2}, acknowledging
the foundational work of Griffiths, see \cite{DG,G1,G2,G3}, also \cite{MT}. Another point of view, the analytic and quantization aspects,
is that of ``Bernoulli systems", based on the approach of \cite{FS1}. See \cite{GR1,GR2} for probabilistic aspects as well.\\

The symmetric representation, the action of operators on symmetric tensor space,  is a main part of invariant theory. See \cite{Sl} in the context of
coding theory and for the invariant theory setting \cite{Sp}, e.g. The infinite dimensional
version,  boson Fock space, is fundamental in the second quantization of operators in quantum theory and  is a major component
of quantum probability theory.\\

Important aspects of this work include a general recurrence formula for symmetric representations in Section 2. 
The Transpose Lemma plays an essential r\^ole in this work. The vector field formulation and Lie algebra maps are
standard constructions, newly appearing in this context, important to our approach. 
The Columns Theorem and quantum variables comprise Sections 3 and 4.
The term ``quantum variable" is used to signify unitary equivalence to a diagonal matrix, but the spectrum may
not be entirely real. Multivariate Krawtchouk polynomials make their appearance in Section 5 
with quantum variables providing recurrence formulas and linearization formulas. We conclude after a section with
important examples, especially properties involving reflections.

\begin{subsection}{Basic notations and conventions}

\begin{enumerate}[(\tt i)]

\item We consider polynomials in $d+1$ commuting
variables over a field, which we typically take to be
$\mathbf{C}$, while the constructions are valid over $\mathbf{R}$ as well.\\

\item Multi-index notations for powers. With
$n=(n_0,\ldots,n_d)$, $x=(x_0,\ldots,x_d)$:
$$ x^n = x_0^{n_0}\cdots x_d^{n_d} $$
and the total degree $|n|=\sum_i n_i$.
Typically $m$ and $n$ will denote multi-indices, with
$i,j,k,l$ for single indices. Running indices
may be used as either type, determined from the context.\\

\item Vectors, conventionally taken to be column vectors, will be indicated by boldface, with components as usual, e.g.,
$$\v=\begin{pmatrix} v_0\\v_1\\ \vdots \\ v_d \end{pmatrix} \ .$$

\item We use the following \textsl{summation convention} \\[.1in]
 repeated Greek indices, e.g., $\lambda$ or $\mu$, are summed from $0$ to $d$. \\[.1in]
 Latin indices $i$, $j$, $k$, run from $0$ to $d$ and are summed only when explicitly indicated.\\

\item For simplicity, we will always denote identity matrices of
the appropriate dimension by $I$. \\

The transpose of a matrix $A$ is denoted $A^\top$, with Hermitian transpose
$A^*$. The inner product $\langle \v,\w \rangle=\v^*\w$.
The dot product $\v \cdot \w=\sum v_iw_i=v_\lambda w_\lambda$.\\

We will use the notation $U$ to denote an arbitrary unitary matrix, orthogonal in the real case,
i.e., it is a unitary map with respect to the standard inner product.\\

\item Given $N\ge 0$, $B$ is defined as
the multi-indexed matrix having as its only non-zero entries 
$$B_{mm}= \binom{N}{m}=\frac{N!}{m_0!\ldots m_d!}$$
the multinomial coefficients of order $N$. \\

\item For a tuple of numbers, $\diag{\ldots}$ is the diagonal matrix with the tuple providing the entries forming the main diagonal.\\

\item The bar notation, $A\rightarrow \bar A$, is only used for the symmetric power mapping, 
with complex conjugation indicated by (included in) the star, $z\rightarrow z^*$.
\end{enumerate}
\end{subsection}
\end{section}
\begin{section}{Symmetric powers of matrices}

\begin{subsection}{Basic construction and properties}
Given a $(d+1)\times (d+1)$ matrix, $A$, as usual, 
$(Ax)_i=\sum_j A_{ij}x_j$. For integer $N\ge 0$,
we have the matrix elements, $\bar A$, 
of the symmetric representation of $A$ in
degree $N$ defined by the relations
\begin{equation}  \label{eq:matels}
 (Ax)^m = \sum_n \bar A_{mn}x^n 
\end{equation}
 where we typically drop explicit indication of $N$ if it is understood
from the relations $N=|m|=|n|$, noting that $\bar A_{mn}=0$ unless
$|m|=|n|$. 
Monomials are ordered according to dictionary ordering with 0 ranking first, followed by $1,2,\ldots,d$. Thus the first column
of $\bar A$ gives the coefficients of $x_0^N$, etc.\\

In degree $N$, we have the trace of $\bar A$
$$\tr_{\mathrm{Sym}}^N A = \sum_m\bar A_{mm} \ .$$
For $N=0$, $\bar A_{00}=\tr_{\mathrm{Sym}}^0 A=1$, for any matrix $A$.\\

\begin{remark}\rm
If we require the degree to be indicated explicitly we write $\bar A^{(N)}$
accordingly.
\end{remark}

\begin{subsubsection}{Homomorphism property}
Successive application of $B$ then $A$ 
shows that this is a homomorphism of the multiplicative semigroup of
square $(1+d)\times (1+d)$ matrices into the multiplicative semigroup of
square $\binom{N+d}{N}\times \binom{N+d}{N}$ matrices. We call this
the \textbf{symmetric representation} of the multiplicative semigroup of matrices.
In particular, it is a representation of ${\rm GL}(d+1)$ as matrices in ${\rm GL}(\nu)$,
with $\displaystyle \nu=\binom{N+d}{N}$.\\

\begin{proposition}\label{P:hom} Matrix elements satisfy the homomorphism property
$$ \overline{AB}_{mn}=\sum_k \bar{A}_{mk} \bar{B}_{kn}\ .$$ 
\end{proposition}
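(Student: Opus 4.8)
The plan is to compute $((AB)x)^m$ in two ways and compare. The key observation is that the defining relation \eqref{eq:matels} is an identity of polynomials in the indeterminates $x_0,\dots,x_d$, hence it survives any linear substitution of those variables. So, first, I would set $y=Bx$ and apply \eqref{eq:matels} to $A$ in the variables $y$, obtaining $(Ay)^m=\sum_k \bar A_{mk}\,y^k$, which gives
\[
  ((AB)x)^m=(A(Bx))^m=\sum_k \bar A_{mk}\,(Bx)^k .
\]
Next I would apply \eqref{eq:matels} to $B$ to expand $(Bx)^k=\sum_n \bar B_{kn}\,x^n$, so that
\[
  ((AB)x)^m=\sum_n\Bigl(\sum_k \bar A_{mk}\,\bar B_{kn}\Bigr)x^n .
\]

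On the other hand, \eqref{eq:matels} applied directly to the product $AB$ reads $((AB)x)^m=\sum_n \overline{AB}_{mn}\,x^n$. Equating the two expansions and using linear independence of the monomials $\{x^n:|n|=N\}$ over the (infinite) base field, I would read off $\overline{AB}_{mn}=\sum_k \bar A_{mk}\,\bar B_{kn}$, which is the assertion. Note there is nothing to check about the range of summation: $\bar A_{mk}=0$ unless $|k|=|m|=N$, so the sum over $k$ is finite and supported on multi-indices of total degree $N$, exactly as with $\overline{AB}_{mn}$, where $|m|=|n|=N$.

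I do not expect a real obstacle here; the statement is just functoriality of the assignment $A\mapsto\bar A$ — equivalently, $\bar A$ is the matrix, in the monomial basis, of the linear endomorphism induced by $A$ on the degree-$N$ homogeneous part of the polynomial algebra, and composition of such induced maps corresponds to matrix multiplication. The only step meriting a word of care is the substitution $y=Bx$ into \eqref{eq:matels}: this is legitimate precisely because \eqref{eq:matels} is a formal polynomial identity rather than a pointwise relation, so the substitution principle for polynomials applies verbatim.
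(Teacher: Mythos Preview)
Your proof is correct and follows essentially the same approach as the paper: both compute $((AB)x)^m$ two ways—once directly via the defining relation for $\overline{AB}$, once by setting $w=Bx$ and expanding first with $\bar A$ then with $\bar B$—and then compare coefficients of $x^n$. Your additional remarks on the legitimacy of the substitution and on the support of the sum in $k$ are sound but not needed beyond what the paper does.
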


\begin{proof}
  Let $y=(AB)x$ and  $w=Bx$. Then,
  \begin{eqnarray*}\label{E:mats}
    y^m &=& \sum_n \overline{AB}_{mn} \,x^n \\
        &=& \sum_k \bar{A}_{mk} \,w^k \\
        &=& \sum_n \sum_k \bar{A}_{mk}\bar{B}_{kn}\, x^n\ .
  \end{eqnarray*}
\end{proof}

\begin{remark} \rm It is important to note that the bar mapping is {\it not\ } linear, neither homogeneous nor additive.
For example, for $N>1$,
$$\overline{X+X}=\overline{2X}=2^N\bar X\ne 2\,\bar X\ .$$
\end{remark}
\end{subsubsection}
\begin{subsubsection}{Transpose Lemma}

Here we show the relation between $\overline{A^\top}$ and
${\bar A}^\top$. The idea is to consider the polynomial
bilinear form $\sum\limits_{i,j}x^iA_{ij}y^j$ and its analogous forms
in degrees $N>0$. The multinomial
expansion yields
\begin{equation}
  \label{eq:bilinear}
(\sum_{i,j}x^iA_{ij}y^j)^N=
\sum_m x^m \binom{N}{m}\,(Ay)^m =
\sum_m x^m \binom{N}{m}\,\bar A_{mn}y^n
\end{equation}
for degree $N$.\\

Replacing $A$ by $A^\top$ yields
$$(\sum_{i,j}x^iA_{ji}y^j)^N=
\sum_{m,n} x^m \binom{N}{m}\,\overline{A^\top}_{mn}y^n$$
which, interchanging indices $i$ and $j$, equals
\begin{eqnarray*}
(\sum_{i,j}y^iA_{ij}x^j)^N&=&
\sum_{m,n} y^m \binom{N}{m}\,\overline{A}_{mn}x^n
=\sum_{m,n} x^m \binom{N}{n}\,\overline{A}_{nm}y^n\\
&=&\sum_{m,n} x^m \binom{N}{n}\,{\bar A}^\top_{mn}y^n
\end{eqnarray*}
exchanging indices $m$ and $n$. 
Comparing shows that
$$\overline{A^\top}_{mn}=
\binom{N}{m}^{-1}\binom{N}{n} {\bar A}^\top_{mn}$$
In terms of $B$, we see that
\begin{lemma} {\rm Transpose Lemma}\label{prop:transpose}\\
The symmetric representation satisfies
$$\overline{A^\top} = B^{-1}{\bar A}^\top B$$
and similarly for adjoints
$$\overline{A^*} = B^{-1}{\bar A}^* B \ .$$
\end{lemma}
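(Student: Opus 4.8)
The plan is not to touch $\overline{A^\top}$ directly --- the bar map is nonlinear, so entrywise manipulation leads nowhere --- but to exhibit both $\bar A$ and $\overline{A^\top}$ as coefficient arrays of a single polynomial in two sets of variables $x=(x_0,\dots,x_d)$ and $y=(y_0,\dots,y_d)$. To this end I would introduce the bilinear generating polynomial
\[
G_A(x,y)=\Bigl(\sum_{i,j}x_iA_{ij}y_j\Bigr)^{N}=(x\cdot Ay)^{N}.
\]
Expanding by the multinomial theorem, $x\cdot Ay=\sum_i x_i(Ay)_i$ gives $G_A(x,y)=\sum_m\binom{N}{m}x^m(Ay)^m$, and then the defining relation \eqref{eq:matels} rewrites this as $G_A(x,y)=\sum_{m,n}\binom{N}{m}\bar A_{mn}\,x^m y^n$. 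In other words the coefficient array of $G_A$ is exactly $B\bar A$ (recall $B_{mm}=\binom{N}{m}$, and $B$ is invertible since the multinomial coefficients do not vanish).

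The whole proof then rests on the symmetry $G_{A^\top}(x,y)=G_A(y,x)$, which is immediate: $\sum_{i,j}x_i(A^\top)_{ij}y_j=\sum_{i,j}x_iA_{ji}y_j=\sum_{i,j}y_iA_{ij}x_j$ after renaming the dummy indices $i\leftrightarrow j$. Now read off the coefficient of $x^m y^n$ from both sides. On the left it is $\binom{N}{m}\overline{A^\top}_{mn}$, i.e.\ the $(m,n)$ entry of $B\overline{A^\top}$. On the right, $G_A(y,x)$ has coefficient array $B\bar A$ but expressed in the variables $(y,x)$, so its $x^m y^n$-coefficient is the $(n,m)$ entry $\binom{N}{n}\bar A_{nm}$, which is precisely the $(m,n)$ entry of $\bar A^\top B$. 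Equating the two, $B\overline{A^\top}=\bar A^\top B$, that is $\overline{A^\top}=B^{-1}\bar A^\top B$.

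For the adjoint identity I would combine the transpose identity just proved with three elementary facts: (i) the bar map intertwines entrywise complex conjugation --- conjugating both sides of \eqref{eq:matels} shows that the symmetric power of the entrywise conjugate of $A$ equals the entrywise conjugate of $\bar A$; (ii) $B$ and $B^{-1}$ have real entries, hence are fixed by conjugation; (iii) $A^*$ is the transpose of the entrywise conjugate of $A$. Substituting (iii) into the transpose lemma and then invoking (i) and (ii) yields $\overline{A^*}=B^{-1}\bar A^* B$. Equivalently, one can simply rerun the bilinear computation with the sesquilinear form $\sum_{i,j}\overline{x_i}A_{ij}y_j$ in place of $x\cdot Ay$, using $\langle x,A^*y\rangle=\overline{\langle y,Ax\rangle}$; the overall conjugation that then appears on the right-hand side is exactly what converts $\bar A$ into $\bar A^*$.

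I do not expect any real obstacle here: the argument is bookkeeping. The one place that requires care is the chain of two relabellings --- the swap $i\leftrightarrow j$ inside the bilinear form and the swap $m\leftrightarrow n$ of the monomial multi-indices --- together with keeping track of which side the multinomial factors $\binom{N}{m}$ and $\binom{N}{n}$ land on, since that is what decides whether the conjugating matrix comes out as $B$, as $B^{-1}$, or as neither. Confirming that it is $B$, and on the correct side, is the only step where an error could plausibly creep in.
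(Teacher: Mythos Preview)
Your argument is correct and is essentially the same as the paper's: both introduce the bilinear generating polynomial $(\sum_{i,j}x_iA_{ij}y_j)^N$, expand it via the multinomial theorem and the defining relation for $\bar A$, then use the symmetry under $i\leftrightarrow j$ (equivalently, your $G_{A^\top}(x,y)=G_A(y,x)$) followed by the relabelling $m\leftrightarrow n$ to read off the entrywise identity $\binom{N}{m}\overline{A^\top}_{mn}=\binom{N}{n}\bar A^\top_{mn}$. Your treatment of the adjoint case is slightly more explicit than the paper's ``similarly'', but follows the same route.
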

\end{subsubsection}

\hfill\break\\

\begin{remark}{\sl Diagonal matrices and trace}\\

Note that the $N^{\rm th}$ symmetric power of a diagonal matrix, $D$, is itself diagonal with homogeneous
monomials of the entries of the original matrix along its diagonal. In particular, the trace will be the $N^{\rm th}$ homogeneous	
symmetric function in the diagonal entries of $D$.

\begin{example}
For $V=\left(\begin{matrix}v_{0} & 0 & 0\\0 & v_{1} & 0\\0 & 0 & v_{2}\end{matrix}\right)$ we have
$$\bar V^{(2)}=
\left(\begin{matrix}v_{0}^{2} & 0 & 0 & 0 & 0 & 0\\0 & v_{0} v_{1} & 0 & 0 & 0 & 0\\0 & 0 & v_{0} v_{2} & 0 & 0 & 0\\0 & 0 & 0 & v_{1}^{2} & 0 & 0\\0 & 0 & 0 & 0 & v_{1} v_{2} & 0\\0 & 0 & 0 & 0 & 0 & v_{2}^{2}\end{matrix}\right)$$
and so on .
\end{example}	
\end{remark}

\end{subsection}
\begin{subsection}{General recurrence formulas}
Let $R$ be a $(d+1) \times (d+1)$ matrix. Let ${\bar R}^{(N-1)} \text{ and } {\bar R}^{(N)}$
denote the matrices of the symmetric tensor powers	of $R$ in degrees
$N-1$ and $N$ respectively. \\

{\normalsize Recall the usage $e_i$ for a standard multi-index corresponding
to the standard basis vector $e_i$, $0\le i \le d$.}\\

First some useful operator calculus.\\

\begin{subsubsection}{Boson operator relations}	
Let $\partial_i=\frac{\partial}{\partial x_i}$ and $\xi_i$ denote the operators of partial differentiation with respect to $x_i$ and multiplication
by $x_i$ respectively. The basic commutation relations
$$[\partial_i,\xi_j]=\delta_{ij}1$$
extend to polynomials $f(x)$ 
$$[\partial_i,f(\xi)]=\frac{\partial f}{\partial x_i}1$$
With $\partial_i1=0$, $\forall i$, we write
$$\partial_i f(\xi)1=\partial_i f(x)=\frac{\partial f}{\partial x_i}$$
evaluating as functions of $x$ as usual. \\

Now observe that if $R$ is a matrix with inverse $S$, then 
$$[\partial_\lambda S_{\lambda i},R_{j \mu}\xi_\mu]=\delta_{ij}1$$
i.e. $(\partial)S$ and $R\xi$ satisfy the boson commutation relations. Hence $(\partial)S$ will act as partial differentiation operators on
the variables $Rx$.\\

\begin{remark}For brevity, we write $(\partial)S$ as $\partial S$, where it is understood that the $\partial$ symbol stands for
the tuple $(\partial_0,\ldots,\partial_d)$ and is not acting on the matrix $S$.
\end{remark}

We will use these observations to derive recurrence relations for matrix elements of the symmetric representation.
\end{subsubsection}

\begin{proposition} 
Let $m$ be a multi-index with $|m|=N$ and $n$ be a multi-index with
$|n|=N-1$. Then we have the recurrence relation
$$\sum_{k=0}^d m_k\, {\bar R}^{(N-1)}_{m-e_k,n} R_{kj}=(n_j+1){\bar R}^{(N)}_{m,n+e_j} \ .$$
\end{proposition}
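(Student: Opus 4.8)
The plan is to prove the identity by applying a suitable first-order differential operator to the defining relation \eqref{eq:matels} for $\bar R$ in degree $N$ and then reading off the coefficient of a monomial. Concretely, start from
$$(Rx)^m = \sum_{n'} {\bar R}^{(N)}_{m,n'}\,x^{n'},\qquad |m|=N,$$
and apply $\partial_j$ to both sides. On the right the monomial $x^{n'}$ contributes $n'_j\,x^{n'-e_j}$, so only terms with $n'_j\ge 1$ survive; writing $n' = n+e_j$ with $|n|=N-1$ gives $\sum_n (n_j+1)\,{\bar R}^{(N)}_{m,n+e_j}\,x^n$, which is exactly the right-hand side of the claimed recurrence packaged as the coefficient list of a degree-$(N-1)$ polynomial in $x$.

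The left-hand side is where the chain rule does the work. Since $(Rx)_\lambda = R_{\lambda\mu}x_\mu$, we have $\partial_j (Rx)_\lambda = R_{\lambda j}$, hence
$$\partial_j (Rx)^m = \sum_{k=0}^d m_k\,(Rx)^{m-e_k}\,R_{kj}.$$
Now re-expand each $(Rx)^{m-e_k}$ using \eqref{eq:matels} in degree $N-1$: $(Rx)^{m-e_k} = \sum_n {\bar R}^{(N-1)}_{m-e_k,n}\,x^n$. Substituting and collecting the coefficient of $x^n$ yields $\sum_{k=0}^d m_k\,{\bar R}^{(N-1)}_{m-e_k,n}\,R_{kj}$, which is the left-hand side of the proposition. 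Equating the two coefficient expressions for $x^n$ finishes the proof. One should note the harmless convention that $(Rx)^{m-e_k}$ and ${\bar R}^{(N-1)}_{m-e_k,n}$ are interpreted as $0$ when $m_k=0$, so those terms drop out of the sum automatically; this matches the fact that the factor $m_k$ already kills them.

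There is really no serious obstacle here — it is a bookkeeping argument — but the step that most deserves care is keeping the two degrees ($N$ on the right, $N-1$ on the left after differentiation) straight and making sure the monomials being compared genuinely have total degree $N-1$ on both sides. It is cleanest to phrase the whole computation as an identity between polynomials in $x$ of degree $N-1$, namely
$$\partial_j (Rx)^m = \sum_{k=0}^d m_k\,R_{kj}\,(Rx)^{m-e_k},$$
expand both sides in the monomial basis $\{x^n : |n|=N-1\}$ via \eqref{eq:matels}, and then invoke linear independence of monomials. Alternatively, the same result follows from the boson-operator formalism just introduced: $\partial_j$ acts on functions of $x$, and the substitution $x\mapsto Rx$ together with the intertwining of $\partial$ with $\partial S$ (for $S=R^{-1}$) encodes exactly the chain rule used above; but the direct monomial comparison is shorter and requires no invertibility of $R$.
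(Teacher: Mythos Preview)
Your argument is correct and in fact cleaner than the paper's. The paper works with the transformed derivations $(\partial S)_i=\partial_\lambda S_{\lambda i}$, where $S=R^{-1}$, applies them to $(Rx)^m$ to obtain the intermediate identity
$$m_i\,{\bar R}^{(N-1)}_{m-e_i,n}=S_{\lambda i}\,(n_\lambda+1)\,{\bar R}^{(N)}_{m,n+e_\lambda},$$
and only then multiplies by $R_{ij}$ and sums over $i$ to collapse the $S$ and arrive at the stated recurrence. You instead apply $\partial_j$ directly and use the chain rule, which produces the $R_{kj}$ factor in one step and never introduces $S$. The trade-off: the paper's route yields a slightly stronger intermediate relation (the one displayed above, valid for each $i$ separately) at the cost of assuming $R$ invertible, whereas your approach gives exactly the proposition with no invertibility hypothesis and less bookkeeping. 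Both are legitimate; yours is the more economical proof of the statement as written.
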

\begin{proof} 
Starting from $(Rx)^m$, use the boson commutation relations 
$$[(\partial S)_i,(R\xi)_j]=\delta_{ij}I$$
where $S=R^{-1}$. We have, evaluating as functions of $x$,
$$(\partial S)_i(Rx)^m=m_i (Rx)^{m-e_i}= \sum_n m_i\bar R_{m-e_i,n}x^n$$
and directly
$$\partial _\lambda S_{\lambda i} \sum_n \bar R_{mn} x^n=S_{\lambda i}\sum_n \bar R_{mn} n_\lambda x^{n-e_\lambda} \ .$$
Shifting the index in this last expression and comparing coefficients with the previous line yields
$$m_i\bar R_{m-e_i,n}=S_{\lambda i}\bar R_{m,n+e_\lambda} (n_\lambda+1)$$
Now  multiply both sides by $R_{ij}$ and sum over $i$ to get the result (with $k$ replacing the running index $i$).
\end{proof}
\end{subsection}

\begin{subsection}{Vector field generating the action of a matrix}
We consider the case where
$$ R=e^g$$
for $g\in \mathfrak{gl}(d+1)$. Recall the method of characteristics:\\

For a vector field $H=a(x)\cdot \partial =a(x)_\lambda \partial _\lambda$, the solution to
$$\frac{\partial u}{\partial t}=Hu$$
with initial condition $u(x,0)=f(x)$ is given by the exponential
$$u(x,t)=\exp(tH)f(x)=f(x(t))$$
where $x(t)$ satisfies the autonomous system
$$\dot x=\frac{\d x}{\d t}=a(x)$$
with initial condition $x(0)=x$. Note that $a(x)$ on the right-hand side
means $a(x(t))$ with $t$ implicit.\\

Now consider
$$H=gx\cdot \partial =g_{\mu \lambda}x_\lambda \partial _\mu$$

We have the characteristic equations
$$\dot x=gx$$
with initial condition $x(0)=x$. The solution is the matrix exponential
$$x(t)=e^{tg}x$$
which we can write as
$$x(t)=R^tx \ .$$
Hence
\begin{proposition} For $R=e^g$, 
we have the one-parameter group action
$$\exp(t\,gx\cdot \partial )f(x)=f(R^tx)  \ .$$
In particular, we have the action of $R$
$$\exp(gx\cdot \partial )f(x)=f(Rx)  \ .$$
\end{proposition}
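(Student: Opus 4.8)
The plan is to establish the one-parameter group action first, and then obtain the action of $R$ itself by setting $t=1$. The proposition reduces to verifying that $u(x,t) := f(R^t x)$ solves the Cauchy problem $\partial_t u = H u$ with $H = gx\cdot\partial$ and $u(x,0) = f(x)$, since the exponential $\exp(tH)$ is characterized (via the method of characteristics recalled just above) as the solution operator of precisely this problem. The initial condition is immediate: $R^0 = e^{0\cdot g} = I$, so $u(x,0) = f(Ix) = f(x)$.

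For the main verification I would compute $\partial_t u(x,t)$ by the chain rule. Writing $y = y(t) = R^t x = e^{tg}x$, we have $u(x,t) = f(y(t))$, so
\begin{equation*}
\frac{\partial u}{\partial t} = \sum_\mu \frac{\partial f}{\partial y_\mu}(y)\,\dot y_\mu = \frac{\partial f}{\partial y_\mu}(y)\,(g y)_\mu = \frac{\partial f}{\partial y_\mu}(y)\, g_{\mu\lambda} y_\lambda,
\end{equation*}
where $\dot y = g y$ is the characteristic system already derived, with solution $y(t) = e^{tg}x$. On the other hand, applying the vector field $H$ to $u$ as a function of $x$ and using the chain rule to pass from $x$-derivatives to $y$-derivatives via $\partial y_\mu/\partial x_\nu = (R^t)_{\mu\nu}$, together with the fact that $H$ annihilates the constant flow-time (i.e. $t$ is held fixed under the spatial vector field), one finds that $H u$ evaluated at $x$ equals the same expression $g_{\mu\lambda} y_\lambda \,(\partial f/\partial y_\mu)(y)$. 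Hence $\partial_t u = H u$, which is what we need. Setting $t=1$ and $R = e^g$ gives $\exp(gx\cdot\partial)f(x) = f(Rx)$.

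An alternative, perhaps cleaner, route is to invoke the recalled statement directly: the method of characteristics says that for $H = a(x)\cdot\partial$ the solution of $\partial_t u = Hu$, $u(x,0)=f(x)$, is $u(x,t) = f(x(t))$ where $\dot x = a(x)$, $x(0)=x$. Specializing $a(x) = gx$, we already solved $\dot x = gx$ to get $x(t) = e^{tg}x = R^t x$, so the solution is $\exp(tgx\cdot\partial)f(x) = f(R^t x)$ with no further computation. I would present it this way, treating the chain-rule computation above as the justification of the recalled principle rather than repeating it.

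The only genuine subtlety — the part I would be most careful about — is the bookkeeping in the chain rule: making sure the summation convention on $\mu$ and $\lambda$ is applied consistently, that one distinguishes differentiation with respect to the original variables $x$ from differentiation with respect to the flowed variables $y = R^t x$, and that one correctly identifies $(R^t)$ with $e^{tg}$ so that $\frac{d}{dt}R^t x = g R^t x = gy$ (using that $g$ commutes with $e^{tg}$). There is no analytic obstacle since everything is polynomial in $x$ and entire in $t$; the content is entirely the identification of the flow, which was done in deriving the characteristic equations $\dot x = gx$ with solution $x(t) = e^{tg}x$.
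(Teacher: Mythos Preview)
Your proposal is correct, and your preferred route (directly invoking the method of characteristics with $a(x)=gx$, solving $\dot x = gx$ to get $x(t)=e^{tg}x=R^tx$) is exactly the paper's argument: the proposition is stated with a ``Hence'' immediately after that very computation, with no separate proof given. Your chain-rule verification is a fine elaboration of the underlying principle but is not needed beyond what the paper already takes for granted.
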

\end{subsection}

\begin{subsection}{Gamma maps}
Let $\phi$ be any smooth homomorphism of matrix Lie groups. Then
the image
$$\phi(e^{tX})$$
of the one-parameter group $\exp(tX)$ is a one -parameter group as well. We define its
generator to be
$$\Gamma_\phi(X)$$
That is,
$$\phi(e^{tX})=e^{t\,\Gamma_\phi(X)} \ .$$
In other words,
$$\Gamma_\phi(X)=\frac{\d}{\d t}\biggm|_0 \phi(e^{tX})=\phi'(I)X$$
by the chain rule, where $\phi'$ is the jacobian of the map $\phi$. Hence
\begin{proposition}The gamma map determined by the relation
$$\phi(e^{tX})=e^{t\,\Gamma_\phi(X)}$$
can be computed as
$$\Gamma_\phi(X)=\frac{\d}{\d t}\biggm|_0 \phi(I+tX)$$
the directional derivative of $\phi$ at the identity in the direction $X$.
\end{proposition}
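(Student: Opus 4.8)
The plan is to observe that both sides of the asserted identity are the velocity vector at $t=0$ of a smooth curve in the target group passing through the identity, and that these two curves agree to first order. First I would read off the right-hand side of the defining relation: differentiating $\phi(e^{tX})=e^{t\,\Gamma_\phi(X)}$ at $t=0$ gives $\frac{\d}{\d t}\bigm|_{0} e^{t\,\Gamma_\phi(X)}=\Gamma_\phi(X)$, so that $\Gamma_\phi(X)=\frac{\d}{\d t}\bigm|_{0}\phi(e^{tX})$. On the other hand, since $\phi$ is a homomorphism of matrix Lie groups we have $\phi(I)=I$, and the chain rule applied to the left-hand curve gives $\frac{\d}{\d t}\bigm|_{0}\phi(e^{tX})=\phi'(I)\cdot\frac{\d}{\d t}\bigm|_{0}e^{tX}=\phi'(I)X$, using $\frac{\d}{\d t}\bigm|_{0}e^{tX}=X$. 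This recovers the formula $\Gamma_\phi(X)=\phi'(I)X$ already recorded just above the statement.

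Next I would treat the claimed expression in exactly the same way. The curve $t\mapsto\phi(I+tX)$ also passes through $\phi(I)=I$ at $t=0$, and the chain rule gives $\frac{\d}{\d t}\bigm|_{0}\phi(I+tX)=\phi'(I)\cdot\frac{\d}{\d t}\bigm|_{0}(I+tX)=\phi'(I)X$. Comparing this with the computation of the previous paragraph yields $\Gamma_\phi(X)=\frac{\d}{\d t}\bigm|_{0}\phi(I+tX)$, which is the assertion. Equivalently, one phrases the whole argument as the single observation that $e^{tX}=I+tX+O(t^2)$, so the two curves $t\mapsto e^{tX}$ and $t\mapsto I+tX$ are tangent at $t=0$, whence the differentiable map $\phi$ sends them to curves with the same initial velocity.

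The only point needing a word of care is that $I+tX$ need not lie in the domain group for all $t\in\RR$, so the curve $t\mapsto\phi(I+tX)$ is a priori defined only for $t$ in a neighborhood of $0$; but that suffices to take the derivative at $t=0$, and for $|t|$ small the matrix $I+tX$ is invertible and lies in a neighborhood of $I$ on which $\phi$ is defined and smooth. I do not expect any genuine obstacle here: the content of the proposition is purely the first-order agreement of $e^{tX}$ with $I+tX$ together with differentiability of $\phi$ at the identity, and the main ``step'' is simply bookkeeping with the chain rule.
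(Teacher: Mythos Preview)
Your proposal is correct and matches the paper's approach: the paper does not give a separate proof environment for this proposition but rather records just before it that $\Gamma_\phi(X)=\frac{\d}{\d t}\bigm|_0\phi(e^{tX})=\phi'(I)X$ by the chain rule, and the proposition is then the immediate observation that $\frac{\d}{\d t}\bigm|_0\phi(I+tX)$ computes the same directional derivative. Your write-up is in fact more detailed than the paper's, including the remark about $I+tX$ lying in the domain only for small $t$, which the paper omits.
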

This provides a convenient formulation that is useful computationally as well as theoretically.\\

\begin{remark}\rm
We denote the gamma map for the symmetric representation simply by $\Gamma(X)$.\\

The properties of the gamma map as a Lie homomorphism are shown in the Appendix.
\end{remark}

The main result we will use for KG-systems follows:\\

\begin{corollary}\label{cor:gamma}\hfill\break
Given matrices $Y_1,\ldots,Y_s$. The coefficient of $v_j$ in $\displaystyle \overline{I+\sum_j v_jY_j}$ is $\Gamma(Y_j)$. That is,
$$\overline{I+\sum_j v_jY_j}=I+\sum_j v_j \Gamma(Y_j) +[\text{higher order terms in $v_j$'s}]$$
\end{corollary}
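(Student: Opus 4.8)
The plan is to combine the polynomial nature of the bar map with the single-variable characterization of $\Gamma$ furnished by the preceding Proposition. First I would record that, by the defining relation \eqref{eq:matels}, each entry $\bar A_{mn}$ is a homogeneous polynomial of degree $N$ in the entries of $A$; hence
$$F(v_1,\dots,v_s):=\overline{I+\textstyle\sum_j v_jY_j}$$
is a matrix-valued polynomial in the scalars $v_1,\dots,v_s$, so it has an exact, finite multivariate Taylor expansion about the origin and no convergence issue arises.

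Next I would identify the degree-$0$ and degree-$1$ parts of that expansion. Setting $v=0$ gives $F(0)=\bar I$, and since $(Ix)^m=x^m$ the relation \eqref{eq:matels} forces $\bar I_{mn}=\delta_{mn}$, i.e.\ $\bar I=I$ in dimension $\nu=\binom{N+d}{N}$. For the coefficient of $v_j$ I would compute $\partial F/\partial v_j$ at $v=0$; holding all variables other than $v_j$ equal to zero turns this partial derivative into the ordinary derivative
$$\frac{\d}{\d t}\Bigm|_0\overline{I+tY_j},$$
which, by the preceding Proposition — the directional-derivative formula $\Gamma_\phi(X)=\frac{\d}{\d t}\bigm|_0\phi(I+tX)$ specialized to the symmetric-power map $A\mapsto\bar A$ — is precisely $\Gamma(Y_j)$. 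Collecting these gives
$$F=I+\sum_j v_j\,\Gamma(Y_j)+\bigl(\text{terms of total degree}\ge 2\text{ in the }v_j\bigr),$$
which is the assertion.

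This is essentially bookkeeping, so I do not expect a genuine obstacle; the only point to treat with a little care is the claim that the remainder really consists of higher-order terms in the $v_j$'s, which is immediate once one notes that $F$ is a polynomial of degree $N$, so Taylor's formula is exact and its remainder has neither a constant nor a linear part. As an alternative route one could instead differentiate the homomorphism identity of Proposition~\ref{P:hom} applied to $\overline{\bigl(I+\sum_j v_jY_j\bigr)\bigl(I+\sum_k w_kY_k\bigr)}$, but the direct Taylor expansion seems the cleanest.
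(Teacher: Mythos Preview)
Your argument is correct. It differs from the paper's route in one small but genuine way: the paper introduces a single auxiliary scalar $t$, writes $\overline{I+t\sum_j v_jY_j}$, and reads off the coefficient of $t$ as $\Gamma\bigl(\sum_j v_jY_j\bigr)$ via the directional-derivative Proposition, then invokes the \emph{linearity} of $\Gamma$ (established only later, in the Appendix) to split this into $\sum_j v_j\Gamma(Y_j)$; finally it observes that the $t^2$-tail collects exactly the terms of total degree $\ge 2$ in the $v_j$ and sets $t=1$. Your approach instead takes each partial derivative $\partial/\partial v_j$ at the origin separately, which reduces immediately to the one-variable formula $\frac{\d}{\d t}\big|_0\overline{I+tY_j}=\Gamma(Y_j)$ and never needs the additivity of $\Gamma$. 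The paper's trick packages the homogeneity bookkeeping neatly into a single parameter, while your version is slightly more self-contained at this point in the exposition; either way the content is the same Taylor/directional-derivative computation.
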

\begin{proof} Introduce the factor $t$. By linearity,
$$\overline{I+t\sum_j v_jY_j}=I+t\sum_j v_j \Gamma(Y_j)+t^2(\ldots)$$
where $t^2$ multiplies the terms higher order in the $v$'s. Setting $t=1$ yields the result.
\end{proof}

The Transpose Lemma applies to the gamma map as follows:\\

\begin{proposition} Under the $\Gamma$-map the Hermitian transpose satisfies
$$\Gamma(X^*)=B^{-1}\Gamma(X)^* B\ .$$
\end{proposition}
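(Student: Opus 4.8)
The plan is to derive the identity for $\Gamma(X^*)$ directly from the Transpose Lemma (Lemma \ref{prop:transpose}) together with the characterization of $\Gamma$ as a directional derivative at the identity (the Proposition immediately preceding). First I would recall that $\Gamma(X) = \frac{\d}{\d t}\big|_0 \overline{I+tX}$, so that as a formal expansion $\overline{I+tX} = I + t\,\Gamma(X) + O(t^2)$. The essential observation is that the Transpose Lemma, though stated for a single matrix $A$, is an identity of matrix-valued functions: for \emph{every} matrix $A$ one has $\overline{A^*} = B^{-1}\,\bar A^{*}\,B$. In particular I may substitute $A = I + tX$, which is legitimate because $B$ does not depend on the matrix being exponentiated—it depends only on $N$ and $d$.

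The main steps, in order, are: (1) write $\overline{(I+tX)^*} = \overline{I + tX^*}$ and expand the left side using the $\Gamma$-characterization, giving $I + t\,\Gamma(X^*) + O(t^2)$; (2) apply the Transpose Lemma to $A = I+tX$ to get $\overline{(I+tX)^*} = B^{-1}\,\overline{I+tX}^{\,*}\,B$; (3) expand $\overline{I+tX} = I + t\,\Gamma(X) + O(t^2)$, take the Hermitian adjoint to obtain $\overline{I+tX}^{\,*} = I + t\,\Gamma(X)^* + O(t^2)$, and conjugate by $B$ (noting $B^{-1} I B = I$) to get $B^{-1}\,\overline{I+tX}^{\,*}\,B = I + t\,B^{-1}\Gamma(X)^* B + O(t^2)$; (4) equate the coefficients of $t^1$ on both sides to conclude $\Gamma(X^*) = B^{-1}\Gamma(X)^* B$. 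Since $B$ is real and diagonal (its entries are multinomial coefficients), $B^* = B$, so no subtlety arises in moving the adjoint past $B^{\pm 1}$; one simply uses $(B^{-1} C B)^* = B^{*} C^{*} (B^{-1})^{*} = B\,C^{*}\,B^{-1}$ if one ever needs to, but here the adjoint is applied before the conjugation so even that is not required.

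There is essentially no obstacle: the only point demanding a word of care is the justification that the Transpose Lemma may be applied with $A$ replaced by the curve $t \mapsto I+tX$ rather than a fixed matrix—but this is immediate since the lemma holds for all matrices and $B$ is independent of $A$. An alternative route, which I would mention as a remark, bypasses the expansion entirely: apply the Transpose Lemma to $A = e^{tX}$, so that $\overline{e^{tX^*}} = \overline{(e^{tX})^*} = B^{-1}\,\overline{e^{tX}}^{\,*}\,B$; since $\overline{e^{tX^*}} = e^{t\,\Gamma(X^*)}$ and $\overline{e^{tX}} = e^{t\,\Gamma(X)}$ by definition of the gamma map, the right side equals $B^{-1}\,(e^{t\,\Gamma(X)})^*\,B = B^{-1} e^{t\,\Gamma(X)^*} B = e^{t\,B^{-1}\Gamma(X)^* B}$, and differentiating at $t=0$ gives the claim. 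Either derivation is short; the directional-derivative version is perhaps the cleaner one to present since it matches the computational formulation emphasized just above in the text.
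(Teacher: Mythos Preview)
Your proposal is correct and essentially the same as the paper's argument: apply the Transpose Lemma to a one-parameter family through the identity and differentiate at $t=0$. The paper uses your ``alternative route'' verbatim---it applies the lemma to $A=e^{tX}$, writes $e^{t\Gamma(X^*)}=\overline{e^{tX^*}}=B^{-1}\overline{e^{tX}}^{\,*}B=B^{-1}e^{t\Gamma(X)^*}B$, and differentiates---whereas your primary presentation via $A=I+tX$ is just the first-order truncation of the same computation.
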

\begin{proof}
Write
$$\overline{X^*}=B^{-1}\bar{X}^* B$$
Exponentiating:
\begin{align*}
e^{t\Gamma(X^*)}&=\overline{e^{tX^*}}=\overline{\left(e^{tX}\right)^*}\\
&=B^{-1}\overline{e^{tX}}^* B=B^{-1}e^{t\Gamma(X)^*}B
\end{align*}
differentiating with respect to $t$ at 0 yields the result.
\end{proof}

\begin{subsubsection}{Gamma map for symmetric representation}
For the symmetric representation, take $f=x^m$ in the above Proposition. We have
\begin{align*}
\exp(t\,gx\cdot \partial ) x^m&=\sum_n (\overline{e^{tg}})_{mn} x^n\\
&=\sum_n (e^{t\Gamma(g)})_{mn} x^n \ .
\end{align*}
Now differentiate with respect to $t$ at $t=0$ to get
$$(gx\cdot \partial ) x^m=\sum_n \Gamma(g)_{mn} x^n $$
The left-hand side is
$$g_{\mu \lambda}x_\lambda \partial _\mu x^m=\sum_{j,k} g_{kj}m_k x^{m-e_k+e_j}$$
Hence
\begin{proposition} The entries of $\Gamma(g)$ are given by
$$
 \Gamma(g)_{mn}=\begin{cases}
m_i \, g_{ij}\,,& {\rm if\ }n=m-e_i+e_j \\
0,& {\rm otherwise}
\end{cases}
$$
with $i,j$ each ranging from $0$ to $d$.
\end{proposition}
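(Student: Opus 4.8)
The plan is to differentiate the exponential identity for the symmetric representation at $t=0$, exactly as the paragraph preceding the statement already begins, and then to compute the left-hand side explicitly. Starting point: from Proposition on the vector field action, $\exp(t\,gx\cdot\partial)\,x^m=\sum_n(\overline{e^{tg}})_{mn}\,x^n=\sum_n(e^{t\Gamma(g)})_{mn}\,x^n$. Applying $\frac{\d}{\d t}\bigm|_0$ to both sides gives $(gx\cdot\partial)\,x^m=\sum_n\Gamma(g)_{mn}\,x^n$, so the task reduces to expanding the differential operator $gx\cdot\partial$ acting on the monomial $x^m$ and reading off the coefficient of each $x^n$.

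For the left-hand side, I would write $gx\cdot\partial=g_{\mu\lambda}x_\lambda\partial_\mu$ using the summation convention, apply it to $x^m$, and use $\partial_\mu x^m=m_\mu x^{m-e_\mu}$ (interpreted as $0$ when $m_\mu=0$). This yields $g_{\mu\lambda}x_\lambda m_\mu x^{m-e_\mu}=\sum_{j,k}g_{kj}m_k\,x^{m-e_k+e_j}$ after renaming the summed indices to $j,k$ running from $0$ to $d$. Comparing coefficients of $x^n$ on both sides: the term with indices $k,j$ contributes to the coefficient of $x^n$ precisely when $n=m-e_k+e_j$, with contribution $m_k g_{kj}$. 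Hence $\Gamma(g)_{mn}=m_i g_{ij}$ when $n=m-e_i+e_j$ for some $i,j$, and $\Gamma(g)_{mn}=0$ otherwise, which is the claim.

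The one point requiring a little care is well-definedness of the case distinction: a given pair $(m,n)$ with $|m|=|n|=N$ might be representable as $n=m-e_i+e_j$ in more than one way (for instance when $m=n$, any $i=j$ works, and more generally $m-e_i+e_j=m-e_{i'}+e_{j'}$ can happen). I would note that the formula is nonetheless consistent: if $n=m$ the diagonal entry is $\sum_i m_i g_{ii}$ summed over the admissible $i$ (those with $m_i>0$), which is just the total contribution; and if $n\ne m$ differs from $m$ in exactly the slots $i$ (decreased) and $j$ (increased), the representation is unique. So the statement is best read as: $\Gamma(g)_{mn}$ equals the sum of $m_i g_{ij}$ over all $(i,j)$ with $n=m-e_i+e_j$, which collapses to a single term off the diagonal. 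I do not expect any genuine obstacle here — the proof is a direct differentiation followed by a monomial-coefficient comparison — but the bookkeeping of coincident index pairs on the diagonal is the only place where one must be slightly careful in stating the conclusion.
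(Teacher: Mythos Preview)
Your proof is correct and follows exactly the paper's own argument: differentiate the exponential identity at $t=0$ to obtain $(gx\cdot\partial)\,x^m=\sum_n\Gamma(g)_{mn}x^n$, expand $g_{\mu\lambda}x_\lambda\partial_\mu x^m=\sum_{j,k}g_{kj}m_k x^{m-e_k+e_j}$, and read off the coefficients. Your additional remark about the diagonal case (where multiple pairs $(i,j)$ with $i=j$ yield the same $n=m$, so that $\Gamma(g)_{mm}=\sum_i m_i g_{ii}$) is a useful clarification that the paper leaves implicit.
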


\begin{remark}{\sl Diagonal matrices and trace}\\
The gamma map for a diagonal matrix will be diagonal with entries sums of choices of $N$ of the entries of the original matrix corresponding
to the monomials of the symmetric power. The trace turns out to be
$$\tr \Gamma(X)=\binom{N+d}{d+1}\,\tr X  \ .$$
\begin{example}
For $V=\left(\begin{matrix}v_{0} & 0 & 0\\0 & v_{1} & 0\\0 & 0 & v_{2}\end{matrix}\right)$ we have, for $d=2$, $N=2$,
$$\Gamma(V)=\left(\begin{matrix}2 v_{0} & 0 & 0 & 0 & 0 & 0\\0 & v_{0} + v_{1} & 0 & 0 & 0 & 0\\0 & 0 & v_{0} + v_{2} & 0 & 0 & 0\\0 & 0 & 0 & 2 v_{1} & 0 & 0\\0 & 0 & 0 & 0 & v_{1} + v_{2} & 0\\0 & 0 & 0 & 0 & 0 & 2 v_{2}\end{matrix}\right)$$
and so on .
\end{example}	
\end{remark}
\end{subsubsection}
\end{subsection}
\end{section}
\begin{section}{Columns theorem and quantum variables}
Given a matrix $A$, with each column of $A$ form a diagonal matrix. Thus, 
$$\Lambda_j=\text{diag}\,( (A_{ij}))$$
that is,
$$(\Lambda_j)_{ii}=A_{ij} \ .$$

\begin{theorem}{\rm Columns Theorem.} \label{thm:cols}\\
For any matrix $A$, let $\Lambda_j$ be the diagonal matrix formed from column $j$ of $A$. Let
$$\Lambda=\sum v_j\Lambda_j \ .$$
Then the coefficient of $v^n$ in the level $N$ induced matrix $\bar\Lambda$
is a diagonal matrix with entries the $n^{\rm th}$ column of $\bar A$.
\end{theorem}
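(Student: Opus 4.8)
The plan is to unwind all the definitions and reduce the statement to the defining relation \eqref{eq:matels} for symmetric powers, read as a polynomial identity with the variable vector $x$ replaced by $v=(v_0,\ldots,v_d)^\top$.

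First I would observe that, since each $\Lambda_j$ is diagonal, so is $\Lambda=\sum_j v_j\Lambda_j$, and its $i$-th diagonal entry is $\sum_j v_j(\Lambda_j)_{ii}=\sum_j A_{ij}v_j=(Av)_i$. Thus $\Lambda=\diag{(Av)_0,\ldots,(Av)_d}$. Next I would invoke the fact recorded above (and immediate from \eqref{eq:matels}) that the $N$-th symmetric power of a diagonal matrix $D=\diag{d_0,\ldots,d_d}$ is again diagonal, with $\bar D_{mm}=\prod_i d_i^{m_i}=d^m$. Applying this to $\Lambda$ gives that $\bar\Lambda$ is diagonal with
$$\bar\Lambda_{mm}=\prod_i (Av)_i^{m_i}=(Av)^m\ .$$

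Now comes the one substantive point: the identity $(Av)^m=\sum_n \bar A_{mn}\,v^n$ is nothing but \eqref{eq:matels} regarded as an identity of polynomials in indeterminates, with $v$ in place of $x$. Hence $\bar\Lambda_{mm}=\sum_n \bar A_{mn}\,v^n$, and reading off the coefficient of $v^n$ shows that the coefficient of $v^n$ in $\bar\Lambda$ is the diagonal matrix whose $(m,m)$ entry is $\bar A_{mn}$, i.e. the diagonal matrix carrying the $n$-th column of $\bar A$ along its main diagonal. This is exactly the assertion.

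The argument is essentially a bookkeeping exercise and I do not expect a serious obstacle. The only things to be careful about are the legitimacy of substituting the coefficient-variables $v$ for the polynomial variables $x$ in \eqref{eq:matels} (valid because \eqref{eq:matels} holds identically) and keeping the dictionary ordering of monomials consistent, so that ``coefficient of $v^n$'' and ``$n$-th column of $\bar A$'' are indexed the same way.
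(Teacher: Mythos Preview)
Your proof is correct and is essentially the same as the paper's own argument. The paper phrases the computation via $y=\Lambda x$, obtaining $y_k=(\sum_j v_j A_{kj})x_k$ and hence $y^m=(\sum_n \bar A_{mn} v^n)\,x^m$, which amounts to exactly your observation that $\bar\Lambda$ is diagonal with $\bar\Lambda_{mm}=(Av)^m=\sum_n \bar A_{mn}\,v^n$; you have simply invoked the earlier remark on diagonal matrices explicitly rather than rederiving it in situ.
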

\begin{proof}
Setting $y=\Lambda x$, we have
$$y_k=(\sum v_j A_{kj})x_k \ \Rightarrow \ y^m=(\sum {\bar A}_{mn} v^n)\,x^m \ .$$
A careful reading of the coefficients yields the result.
\end{proof}

We may express this in the following useful way:  \\

\textsl{
the diagonal entries of $\bar\Lambda$ are generating functions for the matrix elements of $\bar A$ .}

And we have
\begin{corollary}\label{cor:cols}
Let $A$ be such that the first column, label 0, consists of all 1's. Then with $\Lambda_j$ as in the above theorem, we have
$$\diag{\text{column }j \text{ of } \bar A}=\Gamma(\Lambda_j)$$
for $1\le j \le d$.
\end{corollary}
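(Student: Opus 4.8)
The plan is to combine the Columns Theorem with Corollary~\ref{cor:gamma}. The Columns Theorem tells us that the coefficient of $v^n$ in $\bar\Lambda$, where $\Lambda=\sum_{j}v_j\Lambda_j$, is $\diag{\text{column }n\text{ of }\bar A}$. The hypothesis that column $0$ of $A$ is all $1$'s means that $\Lambda_0=I$, so we may write $\Lambda = I + \sum_{j=1}^d v_j\Lambda_j$, which is exactly the form appearing in Corollary~\ref{cor:gamma} with $Y_j=\Lambda_j$ for $1\le j\le d$.

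First I would observe that with column $0$ of $A$ equal to $\mathbf 1$, the diagonal matrix $\Lambda_0=\diag{(A_{i0})}=\diag{(1,\ldots,1)}=I$. Next I would expand $\bar\Lambda$ in the $v_j$'s in two ways. On one hand, Corollary~\ref{cor:gamma} applied to $Y_j=\Lambda_j$ gives
$$\overline{I+\sum_{j=1}^d v_j\Lambda_j}=I+\sum_{j=1}^d v_j\,\Gamma(\Lambda_j)+[\text{higher order in the }v_j\text{'s}] \ .$$
On the other hand, the Columns Theorem expands the same matrix $\bar\Lambda$ as $\sum_n v^n\,\diag{\text{column }n\text{ of }\bar A}$. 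Equating the coefficient of the single variable $v_j$ (i.e.\ $v^{e_j}$) in these two expansions gives $\Gamma(\Lambda_j)=\diag{\text{column }e_j\text{ of }\bar A}$ for $1\le j\le d$. Here I should note that the multi-index $e_j$, as a column label of $\bar A$ at level $N$, is shorthand for the monomial indexing that column; with the paper's convention (monomial $x_j$ raised appropriately), "column $j$ of $\bar A$" in the statement is understood in the same sense as "column $j$ of $A$" that generates $\Lambda_j$, so the identification is consistent.

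The step I expect to require the most care is the bookkeeping of which ``column'' indices are being matched: the linear coefficient $v_j$ on the $\Gamma$-side corresponds to the multi-index $e_j$ on the Columns-Theorem side, and one must check that the diagonal entries $\Gamma(\Lambda_j)_{mm}$ and $(\bar A)_{m,e_j}$ genuinely agree as multi-indexed objects rather than merely in a reindexed sense. This can be verified directly from the proof of the Columns Theorem: the coefficient of $v_j$ in $y^m=\bigl(\sum_n \bar A_{mn}v^n\bigr)x^m$ picks out exactly $\bar A_{m,e_j}x^m$, so the $(m,m)$ entry of the $v_j$-coefficient of $\bar\Lambda$ is $\bar A_{m,e_j}$, which matches $\Gamma(\Lambda_j)_{mm}$ from the corollary. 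All off-diagonal entries vanish on both sides because $\bar\Lambda$ is a symmetric power of a diagonal matrix, hence diagonal (as recorded in the Diagonal matrices remark). Assembling these observations yields $\Gamma(\Lambda_j)=\diag{\text{column }j\text{ of }\bar A}$ for $1\le j\le d$, completing the proof.
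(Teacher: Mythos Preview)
Your proposal is correct and follows essentially the same route as the paper: identify $\Lambda_0=I$, apply the Columns Theorem and Corollary~\ref{cor:gamma} with $v_0$ specialized to $1$, and match the linear coefficients in $v_1,\ldots,v_d$. The paper phrases the same computation slightly differently by keeping $v_0$ and isolating the monomials $v_0^{N-1}v_j$ before setting $v_0=1$; this makes the degree-$N$ bookkeeping more transparent. Your one notational slip is writing ``column $e_j$ of $\bar A$'' and $\bar A_{m,e_j}$: the column indices of $\bar A$ are multi-indices of total degree $N$, so the relevant index is $(N-1)e_0+e_j$, which in the paper's dictionary ordering is exactly what is called ``column $j$''. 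Once that is said, your argument and the paper's coincide.
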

\begin{proof} In the Columns Theorem we have
$$\Lambda=v_0I+\sum_{1\le j\le d} v_j \Lambda_j \ .$$
The leading monomials are $v_0^N$, $v_0^{N-1}v_1$,\ldots,$v_0^{N-1}v_d$. These multiply diagonal matrices with entries the first $1+d$ columns of $\bar A$ respectively. Now let $v_0=1$. By Corollary \ref{cor:gamma} we have the coefficients of $v_1$ through $v_d$ given by the diagonal matrices $\Gamma(\Lambda_j)$, $1\le j\le d$. Combining these two observations yields the result.
\end{proof}

\begin{example}\label{ex:1}
Let $A=\begin{pmatrix}1&3\\2&4\end{pmatrix}$. We have
$$\Lambda_0=\begin{pmatrix}1&0\\0&2\end{pmatrix}\ ,\qquad
\Lambda_1=\begin{pmatrix}3&0\\0&4\end{pmatrix} $$
and
$$\Lambda=\begin{pmatrix}v_1+3v_2&0\\0&2v_1+4v_2\end{pmatrix}$$

We have
$$\bar A^{(2)}=\left(\begin{matrix}1 & 6 & 9\\2 & 10 & 12\\4 & 16 & 16\end{matrix}\right)$$
and
$$\bar\Lambda^{(2)}=\left(\begin{matrix}v_{0}^{2} + 6 v_{0} v_{1} + 9 v_{1}^{2} & 0 & 0\\0 & 2 v_{0}^{2} + 10 v_{0} v_{1} + 12 v_{1}^{2} & 0\\0 & 0 & 4 v_{0}^{2} + 16 v_{0} v_{1} + 16 v_{1}^{2}\end{matrix}\right)
$$
\end{example}

\begin{example}\label{ex:2}
Now consider $A=\left(\begin{matrix}1 & a & d\\1 & b & e\\1 & c & f\end{matrix}\right)$, here $a,b,c,d,e,f$ are arbitrary numbers.
 We have
$$\Lambda_0=\begin{pmatrix}1&0&0\\0&1&0\\ 0&0&1\end{pmatrix}\ ,\qquad
\Lambda_1=\left(\begin{matrix}a & 0 & 0\\0 & b & 0\\0 & 0 & c\end{matrix}\right)\ , \qquad
\Lambda_2=\left(\begin{matrix}d & 0 & 0\\0 & e & 0\\0 & 0 & f\end{matrix}\right) \ .$$
with 
$$\bar A^{(2)}=\left(\begin{matrix}1 & 2 a & 2 d & a^{2} & 2 a d & d^{2}\\1 & a + b & d + e & a b & a e + b d & d e\\1 & a + c & d + f & a c & a f + c d & d f\\1 & 2 b & 2 e & b^{2} & 2 b e & e^{2}\\1 & b + c & e + f & b c & b f + c e & e f\\1 & 2 c & 2 f & c^{2} & 2 c f & f^{2}\end{matrix}\right)$$
and
$$
\Gamma(\Lambda_1)=\left(\begin{matrix}2 a & 0 & 0 & 0 & 0 & 0\\0 & a + b & 0 & 0 & 0 & 0\\0 & 0 & a + c & 0 & 0 & 0\\0 & 0 & 0 & 2 b & 0 & 0\\0 & 0 & 0 & 0 & b + c & 0\\0 & 0 & 0 & 0 & 0 & 2 c\end{matrix}\right),\,
\Gamma(\Lambda_2)=\left(\begin{matrix}2 d & 0 & 0 & 0 & 0 & 0\\0 & d + e & 0 & 0 & 0 & 0\\0 & 0 & d + f & 0 & 0 & 0\\0 & 0 & 0 & 2 e & 0 & 0\\0 & 0 & 0 & 0 & e + f & 0\\0 & 0 & 0 & 0 & 0 & 2 f\end{matrix}\right)
$$
accordingly.
\end{example}
\end{section}
\begin{section}{Associated and quantum variables}\label{sec:qv}
Recall that in the finite-dimensional case, quantum observables are Hermitian matrices, thus unitarily
equivalent to a real diagonal matrix.\\

Let $A$ satisfy the condition that its first column consists of all 1's.
Form $\Lambda_j$ from $A$ as in the Columns Theorem. Define, for each $j$,
$$X_j=A^{-1}\Lambda_j A \ .$$

For $X_j$ so defined, we have the following terminology.

\begin{definition} Given $A$. Define an \textsl{associated variable} to be a matrix equivalent to a diagonal matrix with $A$ as similarity transformation,
where the entries of the diagonal matrix are from a column of $A$.\\

If it turns out that this yields a unitary equivalence, for complex $A$,
we call them \textsl{quantum variables}. For real $A$, they will be quantum observables.
\end{definition}

\begin{remark}Note that our standard choice of $A$ will have $\Lambda_0$ always equal to the identity matrix and hence the same for $X_0$.
\end{remark}

Set $X=\sum v_jX_j$. Then with $\Lambda$ as in the Columns Theorem, we have the spectral relations
\begin{align*} AX&=\Lambda A\\
A e^{tX}&= e^{t \Lambda}A
\end{align*}
barring and taking derivatives at 0 yields $\bar A \Gamma(X)= \Gamma(\Lambda) \bar A$. Taking adjoints we have
\begin{equation}\label{eq:recspec}
\Gamma(X)^*(\bar A)^*=  (\bar A)^*\Gamma(\Lambda)^*
\end{equation}
with columns of $(\bar A)^*$ eigenvectors of $\Gamma(X)^*$, i.e., this is a spectral resolution of $\Gamma(X)^*$. By Corollary \ref{cor:cols}, the corresponding eigenvalues are complex conjugates of the entries of the columns of $\bar A$. For real $A$ we have
real spectra and hence quantum observables.\\

\begin{example}\label{ex:3}
Take $A=\begin{pmatrix}1&3\\1&4\end{pmatrix}$. We have
$$\Lambda_0=\begin{pmatrix}1&0\\0&1\end{pmatrix}\ ,\qquad
\Lambda_1=\begin{pmatrix}3&0\\0&4\end{pmatrix} \ . $$
With $X_0$ the identity, we have
$$X_1= \left(\begin{matrix}0 & -12\\1 & 7\end{matrix}\right)
\qquad \hbox{and}\qquad \Gamma(X_1)=\left(\begin{matrix}0 & -24 & 0\\1 & 7 & -12\\0 & 2 & 14\end{matrix}\right) \ .$$

We find $\bar A=\left(\begin{matrix}1 & 6 & 9\\1 & 7 & 12\\1 & 8 & 16\end{matrix}\right)$ and the matrix relations

$$\left(\begin{matrix}0 & 1 & 0\\-24 & 7 & 2\\0 & -12 & 14\end{matrix}\right)
\left(\begin{matrix}1 & 1 & 1\\6 & 7 & 8\\9 & 12 & 16\end{matrix}\right)
=\left(\begin{matrix}1 & 1 & 1\\6 & 7 & 8\\9 & 12 & 16\end{matrix}\right)
\left(\begin{matrix}6 & 0 & 0\\0 & 7 & 0\\0 & 0 & 8\end{matrix}\right) \ .$$
\end{example}
\end{section}

\begin{section}{K-condition and associated system}
Start with $U$, a unitary matrix with $|U_{i0}|>0$, $0\le i\le d$. \\[.1in]
Make the first column to consist of all 1's as follows. Let 
$$\delta=\begin{pmatrix} U_{00}&& {}\\&\ddots &\\ && U_{d0}\end{pmatrix}$$
the diagonal matrix with diagonal entries the first column of $U$.
$D$ can be any diagonal matrix with positive diagonal entries and $D_{00}=1$.
The initial probabilities are given by the diagonal matrix
$$\p=\delta^*\delta=\begin{pmatrix} |U_{00}|^2&& {}\\&\ddots &\\ && |U_{d0}|^2\end{pmatrix}=
\begin{pmatrix}  p_0&& {}\\&\ddots &\\ && p_d\end{pmatrix}$$
with $p_i>0$, $\tr \p=1$.

\begin{remark}\rm We consider the values $p_i$ as probabilities corresponding to a multinomial process. Namely,
a succession of independent trials each having one of the same $d$ possible outcomes, with
$p_0$ the probability of none of those outcomes occurring and $p_i$ the probability of outcome $i$,
$1\le i \le d$.

\end{remark}

The \textsl{generating matrix} $A$ is defined by
$$A=\delta^{-1}\,U\,\sqrt{D}\ .$$

The essential property satisfied by $A$ is
\begin{definition} \label{def:kcond}\rm    We say that $A$ satisfies the \textsl{$K$-condition} if
there exists a positive diagonal probability matrix $\p$ and a positive diagonal matrix $D$ such that
$$ A^* \p A=D \ .$$
with $A_{i0}=1$, $0\le i\le d$.
\end{definition}

\begin{remark}
1. Note that this says that the columns of $A$ are orthogonal with respect to the weights $p_i$ and squared norms given by $D_{ii}$.\\

2. \textit{A fortiori} $A^{-1}=D^{-1}A^*\p$.
\end{remark}

Let's verify the condition for our construction.
\begin{proposition}
$A=\delta^{-1}\,U\,\sqrt{D}$, as defined above, satisfies the $K$-condition.
\begin{proof}We have
\begin{align*}
A^*\p A&=\sqrt{D}U^*(\delta^{-1})^*\delta^*\delta \delta^{-1}U\sqrt{D}\\
&=\sqrt{D}U^*U\sqrt{D}=D
\end{align*}
as required.
\end{proof}
\end{proposition}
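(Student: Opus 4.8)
The plan is to verify directly that the matrix $A=\delta^{-1}U\sqrt{D}$ satisfies both parts of Definition~\ref{def:kcond}: first that $A^*\p A = D$, and second that its first column is identically $1$. For the first part, I would simply substitute the definition of $A$ into $A^*\p A$ and use the relation $\p=\delta^*\delta$ together with unitarity of $U$. Writing it out, $A^*\p A = (\delta^{-1}U\sqrt{D})^*(\delta^*\delta)(\delta^{-1}U\sqrt{D}) = \sqrt{D}\,U^*(\delta^{-1})^*\delta^*\delta\,\delta^{-1}U\sqrt{D}$. Since $\delta$ is diagonal, $(\delta^{-1})^*\delta^* = (\delta^*)^{-1}\delta^* = I$ and $\delta\,\delta^{-1}=I$, so the middle collapses and we are left with $\sqrt{D}\,U^*U\,\sqrt{D} = \sqrt{D}\,I\,\sqrt{D} = D$, using $D$ diagonal so that $\sqrt{D}\sqrt{D}=D$.

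For the second part, I would check the normalization of the first column. The $i$-th entry of column $0$ of $A$ is $A_{i0} = (\delta^{-1})_{ii}\,U_{i0}\,\sqrt{D}_{00} = U_{i0}^{-1}\cdot U_{i0}\cdot\sqrt{D_{00}}$, since $\delta$ is diagonal with $\delta_{ii}=U_{i0}$, and the hypothesis $D_{00}=1$ gives $\sqrt{D_{00}}=1$. Hence $A_{i0}=1$ for all $i$, as required. The hypotheses $|U_{i0}|>0$ guarantee that $\delta$ is invertible, so $A$ is well-defined, and $D$ having positive diagonal entries makes $\sqrt{D}$ meaningful and invertible.

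There is essentially no obstacle here: the proof is a short chain of substitutions and the only subtlety is bookkeeping with the adjoints of diagonal matrices, namely that $(\delta^{-1})^*\delta^*=I$ exactly (not merely up to a unitary factor) because diagonal matrices commute and inversion commutes with the adjoint. The proposition as stated in the excerpt already records exactly this computation, confirming there is nothing deeper to do. One could additionally remark, as the excerpt's surrounding text does, that this means $\p$ serves as the weight matrix and $D$ the matrix of squared norms for the orthogonality of the columns of $A$, but that interpretive comment is not needed for the proof itself.
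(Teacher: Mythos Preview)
Your proof is correct and follows essentially the same substitution-and-collapse computation as the paper. In fact you are slightly more thorough: the paper's proof verifies only $A^*\p A=D$, whereas you also check the normalization $A_{i0}=1$ that is part of Definition~\ref{def:kcond}, which the paper leaves implicit.
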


Note that the $K$-condition implies that
$$U=\sqrt{\p}\,A\frac{1}{\sqrt{D}}$$
is unitary.\\

\begin{paragraph}{Some contexts}
\begin{enumerate}
\item \textsl{Gaussian quadrature\,.}
Let $\{\phi_0,\ldots,\phi_n\}$ be an orthogonal polynomial sequence with positive weight function on a finite interval $\I$ of the real line.
For Gaussian quadrature,
$$\frac{1}{|\I|}\,\int_{\I} f \approx \sum_k w_k f(x_k)$$
with $x_k$ the zeros of  $\phi_n$ and appropriate weights $w_k$.  Let 
$$A_{ij}=\phi_{i-1}(x_j)$$
Then, with $\Gamma$ the diagonal matrix of squared norms, $\Gamma_{ii}=\|\phi_i\|^2$, we have
$$AWA^{*}=\Gamma $$
where $W$ is the diagonal matrix with $W_{kk}=w_k$. That is, $A^*$ satisfies the $K$-condition.\\

\item \textsl{Association schemes\,.}
Given an association scheme with adjacency matrices $A_i$, the $P$ and $Q$ matrices correspond to the decomposition of 
the algebra generated by the $A_i$ into an orthogonal direct sum, the entries $P_{ij}$ being the corresponding eigenvalues.
A basic result is the relation
$$P^{*} D_\mu P=v\,D_v$$
where $D_\mu$ is the diagonal matrix of multiplicities and $D_v$ the diagonal matrix of valencies of the scheme.
This plays an essential r\^ole in the work of Delsarte, Bannai, and generally in this area.\\

\end{enumerate}
\end{paragraph}
\begin{subsection}{Matrices for multivariate Krawtchouk systems}
For any degree $N$, the $K$-condition implies
$$ \overline{A^*}\bar \p\bar A=\bar D \ .$$
Application of  the {  Transpose Lemma}
$$B\overline{A^*}=\bar A^* B$$ 
with $B$ the special multinomial diagonal matrix yields
$$\bar A^*\, B\bar \p\, \bar A = B\bar D \ .$$
\begin{definition} The \textsl{Krawtchouk matrix} $\Phi$ is defined as
$$\Phi=\bar A^*$$
where $A$ is a matrix satisfying the $K$-condition.
\end{definition}

Thus,
\begin{proposition} \label{prop:kdef}
For $A$ satisfying the $K$-condition, the Krawtchouk matrix $\Phi=\bar A^*$ satisfies the orthgonality relation
$$\Phi\, B\bar \p\, \Phi^*=B\bar D \ .$$
\end{proposition}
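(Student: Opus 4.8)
The plan is to chain together three facts already established in the excerpt: the homomorphism property of the symmetric representation (Proposition \ref{P:hom}), the Transpose Lemma (Lemma \ref{prop:transpose}), and the defining $K$-condition $A^*\p A = D$ together with the relation $A^{-1}=D^{-1}A^*\p$ noted in the remarks. First I would apply the bar map to the $K$-condition. Since the bar map is a multiplicative homomorphism (Proposition \ref{P:hom}), barring $A^*\p A=D$ gives $\overline{A^*}\,\bar\p\,\bar A=\bar D$; here I use that $\p$ and $D$ are diagonal so their bars are again diagonal, and no nonlinearity issue arises because we are barring a product of three matrices, not a sum. This is exactly the relation displayed just before the statement.

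Next I would bring in the Transpose Lemma in its adjoint form, $\overline{A^*}=B^{-1}\bar A^* B$, equivalently $B\,\overline{A^*}=\bar A^* B$, where $B$ is the multinomial diagonal matrix. Substituting $\overline{A^*}=B^{-1}\bar A^* B$ into $\overline{A^*}\,\bar\p\,\bar A=\bar D$ yields $B^{-1}\bar A^* B\,\bar\p\,\bar A=\bar D$, hence $\bar A^* B\bar\p\,\bar A = B\bar D$. At this point I would simply rewrite everything in terms of $\Phi=\bar A^*$: the left side is $\Phi\,B\bar\p\,(\Phi^*)$ once I check that $\bar A = (\bar A^*)^* = \Phi^*$, which follows because the Hermitian transpose is an involution and the bar map, applied to a fixed matrix, commutes with taking adjoints in the sense recorded by the Transpose Lemma's companion formula (or more directly, $\overline{A}$ and $\overline{A^*}$ are genuine matrices and $(\bar A)^* $ is just the ordinary conjugate transpose of the matrix $\bar A$; the notation $\overline{A^*}$ must not be confused with $(\bar A)^*$, and it is the latter that equals $\Phi^*$). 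So the identity becomes $\Phi\, B\bar\p\,\Phi^* = B\bar D$, which is the claim.

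The only genuinely delicate point — and the one I would be careful to spell out — is the bookkeeping between $\overline{A^*}$ and $(\bar A)^*$, precisely because the excerpt's own Transpose Lemma warns that these differ by conjugation with $B$. Everything hinges on writing $\Phi^* = (\bar A^*)^* = \bar A$ (using $\bar A$ to mean the bar of the single matrix $A$), and then recognizing that the relation we derived already has $\bar A$ sitting on the right in the correct slot, so no further transposition is needed. The rest is routine: associativity of matrix multiplication, the diagonality of $B$, $\bar\p$, $\bar D$, and the observation that $\bar\p$ is again a probability-type diagonal matrix so the relation genuinely expresses orthogonality of the rows of $\Phi$ (columns of $\bar A$) with respect to the weight $B\bar\p$.

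\begin{proof}
Apply the bar map to the $K$-condition $A^*\p A=D$. By the homomorphism property, Proposition~\ref{P:hom}, and since $\p$ and $D$ are diagonal,
$$\overline{A^*}\,\bar\p\,\bar A=\bar D \ .$$
By the Transpose Lemma, Lemma~\ref{prop:transpose}, we have $\overline{A^*}=B^{-1}\bar A^* B$, equivalently $B\,\overline{A^*}=\bar A^* B$. Substituting,
$$B^{-1}\bar A^* B\,\bar\p\,\bar A=\bar D \ ,$$
and multiplying on the left by $B$ gives
$$\bar A^*\,B\bar\p\,\bar A=B\bar D \ .$$
Finally, with $\Phi=\bar A^*$ we have $\Phi^*=(\bar A^*)^*=\bar A$, so the displayed identity reads
$$\Phi\,B\bar\p\,\Phi^*=B\bar D \ ,$$
as claimed.
\end{proof}
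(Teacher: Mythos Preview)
Your proof is correct and follows essentially the same route as the paper: bar the $K$-condition using the homomorphism property, apply the Transpose Lemma $B\,\overline{A^*}=\bar A^* B$, and then rewrite in terms of $\Phi=\bar A^*$ with $\Phi^*=\bar A$. The only difference is that you spell out the bookkeeping more explicitly than the paper, which simply displays the two intermediate identities before stating the proposition.
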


The entries of $\Phi$ are the values of the {\bf   multivariate Krawtchouk polynomials} 
thus determined. \\

The rows of $\Phi$ are comprised of the values of the corresponding Krawtchouk polynomials. The relationship above
indicates that these are orthogonal with respect to the associated multinomial distribution.
$B\bar D$ is the diagonal matrix of squared norms according to the orthogonality of the Krawtchouk polynomials.\\

Note that the unitarity of $\displaystyle U=\sqrt{\p}\,A\frac{1}{\sqrt{D}}$ entails
$UU^*=I$ as well. We have
\begin{proposition}The Krawtchouk matrix $\Phi$ satisfies dual orthogonality relations
$$\Phi^* (B\bar D)^{-1}\Phi= (B\bar\p)^{-1} \ .$$
\end{proposition}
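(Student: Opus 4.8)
The plan is to obtain the dual relation by inverting the orthogonality relation of Proposition~\ref{prop:kdef}, once it is observed that every factor occurring there is invertible. In the identity $\Phi\,B\bar\p\,\Phi^* = B\bar D$ the matrices $B$, $\bar\p$, $\bar D$ are diagonal with strictly positive diagonal entries: $B$ by its definition, and $\bar\p$, $\bar D$ because $p_i>0$ and $D$ is a positive diagonal matrix, while the symmetric power of a positive diagonal matrix is again a positive diagonal matrix. Hence $B\bar\p$ and $B\bar D$ are invertible. Moreover $\Phi=\bar A^*$ is invertible: $A$ is invertible (the remark after Definition~\ref{def:kcond} gives $A^{-1}=D^{-1}A^*\p$, or one reads it off from $A=\delta^{-1}U\sqrt{D}$), and the symmetric representation maps $\mathrm{GL}(d+1)$ into $\mathrm{GL}(\nu)$, so $\bar A$ and therefore $\Phi$ lie in $\mathrm{GL}(\nu)$.

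Granting this, the conclusion is a short rearrangement. From $\Phi\,(B\bar\p)\,\Phi^* = B\bar D$ I left-multiply by $\Phi^{-1}$ and right-multiply by $(B\bar D)^{-1}\Phi$, getting $(B\bar\p)\,\Phi^*(B\bar D)^{-1}\Phi = I$; left-multiplying by $(B\bar\p)^{-1}$ then yields $\Phi^*(B\bar D)^{-1}\Phi = (B\bar\p)^{-1}$, which is the assertion. Put differently, this is the elementary linear-algebra fact that a square invertible $M$ satisfying $MNM^*=P$ with $N,P$ invertible automatically satisfies $M^*P^{-1}M=N^{-1}$.

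An alternative route, matching the remark that $UU^*=I$ as well, starts from the other-sided unitarity of $U=\sqrt{\p}\,A\,\tfrac1{\sqrt D}$, namely $\sqrt{\p}\,A\,D^{-1}A^*\sqrt{\p}=I$, i.e.\ $AD^{-1}A^*=\p^{-1}$. Applying the symmetric power functor and the homomorphism property (Proposition~\ref{P:hom}), together with $\overline{D^{-1}}=\bar D^{-1}$ and $\overline{\p^{-1}}=\bar\p^{-1}$ for the diagonal factors and the Transpose Lemma $\overline{A^*}=B^{-1}\bar A^*B$, gives $\bar A\,\bar D^{-1}B^{-1}\bar A^*B=\bar\p^{-1}$; substituting $\bar A^*=\Phi$, $\bar A=\Phi^*$ and commuting the diagonal matrices $B$, $\bar D$, $\bar\p$ past one another reproduces $\Phi^*(B\bar D)^{-1}\Phi=(B\bar\p)^{-1}$. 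In either approach I expect no genuine obstacle; the only care needed is the bookkeeping — confirming the invertibility/positivity claims in the first route, and, in the second, tracking exactly where the factors $B$ and $B^{-1}$ land when the Transpose Lemma is invoked and when diagonal factors are commuted, so that no stray factor of $B$ survives.
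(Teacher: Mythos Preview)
Your proposal is correct, and your first route is essentially the paper's own argument: the paper rewrites Proposition~\ref{prop:kdef} as $(\Phi\,B\bar\p)\bigl(\Phi^*(B\bar D)^{-1}\bigr)=I$, observes these two factors are mutual inverses, and reverses their order---exactly the ``$MNM^*=P\Rightarrow M^*P^{-1}M=N^{-1}$'' fact you invoke, though the paper leaves the invertibility checks implicit. Your alternative route via $UU^*=I$ and the Transpose Lemma is also sound and tracks the factors of $B$ correctly.
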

\begin{proof} Rewrite the relation in Proposition \ref{prop:kdef} as
$$\Phi\, B\bar \p\, \Phi^*(B\bar D)^{-1}=I \ .$$
This says $\Phi\, B\bar \p$ and $\Phi^*(B\bar D)^{-1}$ are mutual inverses. Reversing the order gives the dual form.
\end{proof}

\begin{example}\rm
Start with the orthogonal matrix
$\displaystyle U= \begin{pmatrix}\sqrt{q}&\sqrt{p}\\ \sqrt{p}&-\sqrt{q}\end{pmatrix}\ .$ \\
Factoring out the squares from the first column we have 
$$\p={ \begin{pmatrix}q&0\\ 0&p\end{pmatrix}} \quad\text{and}\quad
A= \begin{pmatrix}1&p\\ 1&-q\end{pmatrix}\ .$$
with
$$A^*\p A={\begin{pmatrix}1&0\\ 0& pq\end{pmatrix}}=D\ .$$ 
Take $N=4$. \\
We have the { Krawtchouk matrix} 
$\Phi={\bar A}^*=$
$$ \begin{pmatrix} 1&1&1&1&1\\4p&-q+3p&-2q+2p&-3q+p&-4q\\
6{p}^{2}&-3pq+3{p}^{2}&{q}^{2}-4pq+{p}^{2}&3{q}^{2}-3pq&6{q}^{2}\\
4{p}^{3}&-3{p}^{2}q+{p}^{3}&2p{q}^{2}-2{p}^{2}q&-{q}^{3}+3p{q}^{2}&-4{q}^{3}\\
{p}^{4}&-{p}^{3}q&{p}^{2}{q}^{2}&-p{q}^{3}&{q}^{4}\end{pmatrix} \ .$$
$\p$ becomes the induced matrix  
$$\bar \p= \begin{pmatrix} q^4&0&0&0&0\\0&q^3p&0&0&0\\0&0&q^2p^2&0&0\\
0&0&0&qp^3&0\\0&0&0&0&p^4\end{pmatrix}\ .$$  \\
and the binomial coefficient matrix $B=\text{diag}(1,4,6,4,1)$.
\end{example}

\begin{example}\label{ex:two}
For an example in two variables, start with 
$$U=\left( \begin {array}{ccc} 1/\sqrt {3}&1/\sqrt {3}&1/\sqrt {3}\\\noalign{\medskip}1/\sqrt {2}&-1/\sqrt {2}&0
\\\noalign{\medskip}1/\sqrt {6}&1/\sqrt {6}&-2/\sqrt {6}
\end {array} \right) \ .$$
Taking $D$ to be the identity, we have
$$A= \left( \begin {array}{rrr} 1&1&1\\\noalign{\medskip}1&-1&0
\\\noalign{\medskip}1&1&-2\end {array} \right) \,,\qquad
\p= \left( \begin {array}{ccc} 1/3&0&0\\\noalign{\medskip}0&1/2&0
\\\noalign{\medskip}0&0&1/6\end {array} \right) \ . $$
The level two induced matrix
$$\Phi^{(2)}=\left( \begin {array}{rrrrrr} 1&1&1&1&1&1\\\noalign{\medskip}2&0&2&-2
&0&2\\\noalign{\medskip}2&1&-1&0&-2&-4\\\noalign{\medskip}1&-1&1&1&-1&
1\\\noalign{\medskip}2&-1&-1&0&2&-4\\\noalign{\medskip}1&0&-2&0&0&4
\end {array} \right)$$
indicating the level explicitly. 
\end{example}

\begin{example}\rm
An example for three variables.\\

Here we illustrate a special property of reflections. Start with the vector $\v^T=(1,-1,-1,-1)$. 
Form the rank-one projection, $V=\v\v^T/\v^T\v$, and  the corresponding reflection $U=2V-I$:
$$U=\left( \begin {array}{cccc} -1/2&-1/2&-1/2&-1/2\\\noalign{\medskip}-1/2&-1/2&1/2&1/2\\\noalign{\medskip}-1/2&1/2&-1/2&1/2
\\\noalign{\medskip}-1/2&1/2&1/2&-1/2\end {array} \right) \ . $$

Rescale so that the first column consists of all 1's to get
$$A= \left( \begin {array}{rrrr} 1&1&1&1\\\noalign{\medskip}1&1&-1&-1
\\\noalign{\medskip}1&-1&1&-1\\\noalign{\medskip}1&-1&-1&1\end {array} \right)$$
with the uniform distribution $p_i=1/4$, and $D=I$. We find
$$\Phi^{(2)}= \left( \begin {array}{rrrrrrrrrr} 1&1&1&1&1&1&1&1&1&1
\\\noalign{\medskip}2&2&0&0&2&0&0&-2&-2&-2\\\noalign{\medskip}2&0&2&0&
-2&0&-2&2&0&-2\\\noalign{\medskip}2&0&0&2&-2&-2&0&-2&0&2
\\\noalign{\medskip}1&1&-1&-1&1&-1&-1&1&1&1\\\noalign{\medskip}2&0&0&-
2&-2&2&0&-2&0&2\\\noalign{\medskip}2&0&-2&0&-2&0&2&2&0&-2
\\\noalign{\medskip}1&-1&1&-1&1&-1&1&1&-1&1\\\noalign{\medskip}2&-2&0&0
&2&0&0&-2&2&-2\\\noalign{\medskip}1&-1&-1&1&1&1&-1&1&-1&1\end {array} \right)$$
With $A^2=4I$, we have, in addition to the orthogonality relation,  that $(\Phi^{(2)})^2=16I$.
\end{example}
\end{subsection}

\begin{subsection}{Quantum variables and recurrence formulas}
We resume from \S\ref{sec:qv}.
Let $A$ satisfy the $K$-condition.
Form $\Lambda_j$ from $A$ as in the Columns Theorem. Define, for each $j$,
$$X_j=A^{-1}\Lambda_j A \ .$$
Observe that from the $K$-condition we have
\begin{align*}
A^{-1}\Lambda_jA&=D^{-1}A^*\delta^*\delta \Lambda_j\delta^{-1}U\sqrt{D}\\
&=D^{-1}\sqrt{D}U^*(\delta^{-1})^*\delta^*\delta\Lambda_j\delta^{-1}U\sqrt{D}\\
&=\frac{1}{\sqrt{D}}\,U^*\Lambda_jU\sqrt{D} \ .
\end{align*}

Thus,
\begin{proposition} If $D$ equals the identity in the $K$-condition for $A$, we have
$$X_j=A^{-1}\Lambda_j A=U^*\Lambda_j U$$
that is, conjugation by $A$ is unitary equivalence. Thus, the $X_j$ are quantum observables in the real case,
and quantum variables generally.
\end{proposition}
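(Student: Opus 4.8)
The plan is to specialize the computation already carried out just above the statement to the case $D=I$, and then read off the structural consequence. First I would recall the explicit factorization $A=\delta^{-1}\,U\,\sqrt{D}$ of the generating matrix together with the consequence $A^{-1}=D^{-1}A^*\p$ of the $K$-condition, where $\p=\delta^*\delta$. Substituting these into $X_j=A^{-1}\Lambda_jA$ and using that $(\delta^{-1})^*\delta^*=I$, one obtains
$$X_j=D^{-1}\sqrt{D}\,U^*\bigl(\delta\Lambda_j\delta^{-1}\bigr)U\sqrt{D}.$$
The key observation is that $\delta$ and $\Lambda_j$ are both diagonal, hence commute, so $\delta\Lambda_j\delta^{-1}=\Lambda_j$; this collapses the middle factor and yields $X_j=\tfrac{1}{\sqrt{D}}\,U^*\Lambda_jU\sqrt{D}$ in general, exactly the relation displayed before the proposition.

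Next I would set $D=I$, so that $\sqrt{D}=D^{-1}=I$ and $A=\delta^{-1}U$, which gives immediately
$$X_j=A^{-1}\Lambda_jA=U^*\Lambda_jU.$$
Since $U$ is unitary, this exhibits $X_j$ as unitarily equivalent to the diagonal matrix $\Lambda_j$, whose diagonal entries are the entries of column $j$ of $A$ by the construction in the Columns Theorem. That is precisely the definition of a quantum variable. In the real case $A$ is real, $U$ is orthogonal, and $\Lambda_j$ is a real diagonal matrix, so $X_j=U^\top\Lambda_jU$ is real symmetric with real spectrum, i.e., a quantum observable.

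There is essentially no obstacle here: the content is bookkeeping with the factorization and the elementary fact that diagonal matrices commute. The only point deserving care is keeping track of adjoints versus transposes of $\delta$ (which may be complex) so that the cancellation $(\delta^{-1})^*\delta^*=I$ is justified, and noting that although $\Lambda_j$'s diagonal entries need not be real in the complex case, unitary equivalence to a diagonal matrix is all that is claimed for a quantum variable, consistent with the terminology fixed in Section~\ref{sec:qv}.
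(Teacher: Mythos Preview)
Your proposal is correct and follows essentially the same route as the paper: it recapitulates the computation $A^{-1}\Lambda_jA=\frac{1}{\sqrt{D}}\,U^*\Lambda_jU\sqrt{D}$ displayed immediately before the proposition (using $A^{-1}=D^{-1}A^*\p$, expanding $A^*$, and cancelling the diagonal factors), and then specializes to $D=I$. Your explicit mention that $\delta$ and $\Lambda_j$ commute because both are diagonal is exactly the point the paper uses silently in passing from its second to its third line.
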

Referring to equation \eqref{eq:recspec}, we have, for each $j$, $0\le j\le d$,
$$\Gamma(X_j)^*\Phi=  \Phi\Gamma(\Lambda_j)^* \ .$$

The left hand side induces combinations of the rows of $\Phi$ while the right hand side multiplies by diagonal elements of $\Gamma(\Lambda_j)^*$.
Now Corollary \ref{cor:cols} tells us that these are (complex conjugates) of the columns from $\bar A$, equivalently, rows of $\Phi$. These
are the rows with labels $1$ through $d$ thus corresponding to the polynomials $x_j$, $1\le j\le d$. Thus, we interpret the above equation
as the relation
$$(\text{Rec})\, \Phi=\Phi\, (\text{Spec})$$
that is, these are recurrence relations for the multivariate Krawtchouk polynomials.\\

In this next example, we keep the variable $v$ and see that our procedure gives multiplication by higher-order polynomials
thus yielding more complex recurrence formulas.

\begin{example}\label{ex:binom}
Start with the orthogonal matrix
$\displaystyle U=\begin{pmatrix}1/\sqrt{2}&1/\sqrt{2}\\ 1/\sqrt{2}&-1/\sqrt{2}\end{pmatrix}\ .$\\[.1in]
Factoring out the squares from the first column we have 
$$\p={\begin{pmatrix}1/2&0\\ 0&1/2\end{pmatrix}} \quad\text{and}\quad
A=\begin{pmatrix}1&1\\ 1&-1\end{pmatrix}\ .$$
satisfying $A^* \p A=I$.
We have the Krawtchouk matrix in degree 4
$$\Phi={\bar A}^*=\begin{pmatrix} 1&1&1&1&1\\ 4&2&0&-2&-4\\ 6&0&-2&0&6\\ 4&-2&0&2&-4\\ 1&-1&1&-1&1\end{pmatrix} \ .$$
The entries of $\p$ become $\displaystyle \bar \p=\frac{1}{16}\,I_5$ 
with the binomial coefficient matrix $B=\text{diag}(1,4,6,4,1)$.\\

Take the second column of $A$, recalling that our indexing starts with 0,
 and form the diagonal matrix $$\Lambda_1=\begin{pmatrix}1&0\\ 0&-1\end{pmatrix}\ .$$
The corresponding observable is $$X_1= A^{-1}\Lambda_1A=\begin{pmatrix}0&1\\ 1&0\end{pmatrix}\, .$$
 Let $\displaystyle \Lambda={ I+v\Lambda_1}\quad\text{and}\quad X= I+v\,X_1$.\\
Then in degree 4, $\bar\Lambda=$
$$\diag{(1+v)^4,(1+v)^3(1-v),(1+v)^2(1-v)^2,(1+v)(1-v)^3,(1-v)^4} \ .$$
And $\bar X=$
$$\begin{pmatrix} 1&4v&6v^2&4v^3&v^4\\ v&1+3v^2&3v+3v^3&3v^2+v^4&v^3\\ 
                                 v^2&2v+2v^3&1+4v^2+v^4&2v+2v^3&v^2\\ v^3&3v^2+v^4&3v+3v^3&1+3v^2&v\\ v^4&4v^3&6v^2&4v&1\end{pmatrix} \ .$$

Now we have the spectrum via the coefficient of $v$ in $\bar\Lambda$
$$\text{Spec}=\text{diag}(4,2,0,-2,-4)$$
which is the same as the row with label 1 in $\Phi$.
The coefficient of $v$ in the transpose of $\bar X$ give the recurrence coefficients
$$\text{Rec}=\begin{pmatrix} 0&1&0&0&0\\ 4&0&2&0&0\\ 0&3&0&3&0\\ 0&0&2&0&4\\ 0&0&0&1&0\end{pmatrix}$$
satisfying the relation
$$(\text{Rec})\, \Phi=\Phi\, (\text{Spec})$$
which is essentially the recurrence relation for the corresponding Krawtchouk polynomials.
Coefficients of higher powers of $v$ thus correspond to higher-order recurrence relations corresponding
to multiplication by higher order Krawtchouk polynomials. For example, the coefficient of $v^2$ yields the relations
\begin{align*}
\left( \begin {array}{ccccc} 0&0&1&0&0\\\noalign{\medskip}0&3&0&3&0\\\noalign{\medskip}6&0&4&0&6\\\noalign{\medskip}0&3&0&3&0
\\\noalign{\medskip}0&0&1&0&0\end {array} \right)
&\begin{pmatrix} 1&1&1&1&1\\ 4&2&0&-2&-4\\ 6&0&-2&0&6\\ 4&-2&0&2&-4\\ 1&-1&1&-1&1\end{pmatrix}\\
&=\begin{pmatrix} 1&1&1&1&1\\ 4&2&0&-2&-4\\ 6&0&-2&0&6\\ 4&-2&0&2&-4\\ 1&-1&1&-1&1\end{pmatrix}
 \left(\begin {array}{ccccc} 6&0&0&0&0\\\noalign{\medskip}0&0&0&0&0
\\\noalign{\medskip}0&0&-2&0&0\\\noalign{\medskip}0&0&0&0&0
\\\noalign{\medskip}0&0&0&0&6\end {array} \right)
\end{align*}
etc.
\end{example}
\begin{example}\rm
Continuing with Example \ref{ex:two},
$$A= \left( \begin {array}{rrr} 1&1&1\\\noalign{\medskip}1&-1&0
\\\noalign{\medskip}1&1&-2\end {array} \right) \,,\qquad
\p= \left( \begin {array}{ccc} 1/3&0&0\\\noalign{\medskip}0&1/2&0
\\\noalign{\medskip}0&0&1/6\end {array} \right) $$
and $D$ the identity. The level two Krawtchouk matrix 
$$\Phi=\left( \begin {array}{rrrrrr} 1&1&1&1&1&1\\\noalign{\medskip}2&0&2&-2
&0&2\\\noalign{\medskip}2&1&-1&0&-2&-4\\\noalign{\medskip}1&-1&1&1&-1&
1\\\noalign{\medskip}2&-1&-1&0&2&-4\\\noalign{\medskip}1&0&-2&0&0&4
\end {array} \right) \ .$$
We have the (pre)spectral matrices
$$\Lambda_1=\diag{1,-1,1} \qquad \hbox{and} \qquad \Lambda_2=\diag{1,0,-2}$$
with $\Lambda_0$ the $3\times 3$ identity. Conjugating by $A$ yields
$$X_1=    \left(\begin{matrix}0 & 1 & 0\\1 & 0 & 0\\0 & 0 & 1\end{matrix}\right)
\qquad \hbox{and} \qquad X_2=\left(\begin{matrix}0 & 0 & 1\\0 & 0 & 1\\1 & 1 & -1\end{matrix}\right) \ .$$
Cranking up to degree 2 we have
$$\Gamma(X_1)^*= \left(\begin{matrix}0 & 1 & 0 & 0 & 0 & 0\\2 & 0 & 0 & 2 & 0 & 0\\0 & 0 & 1 & 0 & 1 & 0\\0 & 1 & 0 & 0 & 0 & 0\\0 & 0 & 1 & 0 & 1 & 0\\0 & 0 & 0 & 0 & 0 & 2\end{matrix}\right)
\qquad \hbox{and} \qquad \Gamma(X_2)^*=\left(\begin{matrix}0 & 0 & 1 & 0 & 0 & 0\\0 & 0 & 1 & 0 & 1 & 0\\2 & 1 & -1 & 0 & 0 & 2\\0 & 0 & 0 & 0 & 1 & 0\\0 & 1 & 0 & 2 & -1 & 2\\0 & 0 & 1 & 0 & 1 & -2\end{matrix}\right)$$
with spectra
$$\Gamma(\Lambda_1)= \left(\begin{matrix}2 & 0 & 0 & 0 & 0 & 0\\0 & 0 & 0 & 0 & 0 & 0\\0 & 0 & 2 & 0 & 0 & 0\\0 & 0 & 0 & -2 & 0 & 0\\0 & 0 & 0 & 0 & 0 & 0\\0 & 0 & 0 & 0 & 0 & 2\end{matrix}\right)
\qquad \hbox{and} \qquad \Gamma(\Lambda_2)=\left(\begin{matrix}2 & 0 & 0 & 0 & 0 & 0\\0 & 1 & 0 & 0 & 0 & 0\\0 & 0 & -1 & 0 & 0 & 0\\0 & 0 & 0 & 0 & 0 & 0\\0 & 0 & 0 & 0 & -2 & 0\\0 & 0 & 0 & 0 & 0 & -4\end{matrix}\right)$$
with no adjoints necessary in the real case.
\end{example}
\end{subsection}
\end{section}

\begin{section}{Illustrative cases}

\begin{subsection}{Reflections}\label{sec:ref}

Given a vector $\v$, form the reflection $$U=R_{\v}=2(\v\v^*/\v^*\v)-I$$ 
fixing $\v$.
This is both self-adjoint and unitary, $U^2=I$. The Krawtchouk	matrix $\Phi$ will
have interesting special properties in this case.

\begin{proposition} Let $U$ be a unitary reflection and let $A$ be the corresponding
generating matrix for the KG-system:
$$A=\delta^{-1}U\sqrt{D}$$
where $\delta$ is the diagonal matrix with diagonal elements from the first column of $U$.
Let $\Phi$ be the Krawtchouk matrix for a given degree. Then:\\

1. {\rm Involutive property:} $\left( \overline{\delta^* D^{-1/2}}\,\Phi\right)^2=I$ .\\

2. {\rm Self-adjointness:} $\Phi B \bar\delta^*\overline{D^{1/2}}$ is self-adjoint.
\end{proposition}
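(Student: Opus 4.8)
The plan is to push the two defining identities of a reflection, $U^2=I$ and $U^*=U$, through the symmetric power and then repackage the results for $\overline{U}$ using the multiplicativity of the bar map together with the Transpose Lemma. First I would fix the setup: from $A=\delta^{-1}U\sqrt D$ we get $U=\delta\,A\,D^{-1/2}$, and since the bar map is a homomorphism of the multiplicative semigroup (Proposition~\ref{P:hom}) the product splits, $\overline{U}=\bar\delta\,\bar A\,\overline{D^{-1/2}}=\bar\delta\,\Phi^*\,\overline{D^{-1/2}}$, using $\bar A=\Phi^*$. I would also assemble the bookkeeping facts used repeatedly: the matrices $\bar\delta$, $\bar\delta^*$, $B$, $\overline{D^{1/2}}$, $\overline{D^{-1/2}}$ are diagonal, hence mutually commuting; $B$ and $\overline{D^{\pm1/2}}$ are real, hence self-adjoint; $\overline{D^{1/2}}$ and $\overline{D^{-1/2}}$ are mutually inverse (apply the bar map to $D^{1/2}D^{-1/2}=I$); $\bar\delta$ is invertible since $|U_{i0}|>0$; and $\overline{\delta^*}=\bar\delta^*$ for the diagonal matrix $\delta$, so $\overline{\delta^*D^{-1/2}}=\bar\delta^*\,\overline{D^{-1/2}}$.

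For part~1 I would start from the homomorphism property applied to $U^2=I$, giving $\overline{U}^2=\overline{U^2}=\overline{I}=I$, i.e.\ $(\bar\delta\,\Phi^*\,\overline{D^{-1/2}})^2=I$. Taking adjoints (and using self-adjointness of $\overline{D^{-1/2}}$) yields $(\overline{D^{-1/2}}\,\Phi\,\bar\delta^*)^2=I$. Since $YX$ is conjugate to $XY$, the relation $(XY)^2=I$ forces $(YX)^2=I$; applying this with $X=\overline{D^{-1/2}}\Phi$ and $Y=\bar\delta^*$ gives $(\bar\delta^*\,\overline{D^{-1/2}}\,\Phi)^2=I$, which is exactly $\bigl(\overline{\delta^*D^{-1/2}}\,\Phi\bigr)^2=I$.

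For part~2 I would combine $U^*=U$ with the adjoint form of the Transpose Lemma (Lemma~\ref{prop:transpose}): $\overline{U}=\overline{U^*}=B^{-1}\overline{U}^*B$, hence $B\overline{U}=\overline{U}^*B=(B\overline{U})^*$; thus $B\overline{U}=B\bar\delta\,\Phi^*\,\overline{D^{-1/2}}$ is self-adjoint. Writing self-adjointness out and reordering the commuting diagonal factors gives $B\bar\delta\,\Phi^*\,\overline{D^{-1/2}}=\overline{D^{-1/2}}\,\Phi\,\bar\delta^*\,B$; multiplying both sides on the left and on the right by $\overline{D^{1/2}}$ and using $\overline{D^{1/2}}\,\overline{D^{-1/2}}=I$ collapses this to $\overline{D^{1/2}}B\bar\delta\,\Phi^*=\Phi B\bar\delta^*\,\overline{D^{1/2}}$. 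The left-hand side is precisely $\bigl(\Phi B\bar\delta^*\,\overline{D^{1/2}}\bigr)^*$, so $\Phi B\bar\delta^*\,\overline{D^{1/2}}$ is self-adjoint.

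The main point to watch — and the only real obstacle — is keeping straight which matrices are diagonal (so they commute freely and their adjoints are transparent) and which are not, namely $\Phi$ and $\bar A$; and invoking the Transpose Lemma in exactly the right form, since it is precisely the intertwiner $B$ between $\overline{A^*}$ and $\overline{A}^*$ that produces the factor $B$ in both statements. One should also bear in mind that the bar map is multiplicative but not linear, though here it is only ever applied to products of matrices and to $U^2$ and $I$, so this causes no trouble; and I note that the argument uses only the reflection identities $U^2=I$, $U^*=U$, not the orthogonality relation of Proposition~\ref{prop:kdef}.
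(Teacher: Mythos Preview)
Your proof is correct and follows essentially the same approach as the paper: both arguments push $U^2=I$ and $U^*=U$ through the bar map, use multiplicativity and the Transpose Lemma, and then shuffle the commuting diagonal factors into place. The only cosmetic difference is that the paper performs the cyclic rearrangement (conjugating by $\delta$ for part~1, clearing $D^{-1/2}$ for part~2) at the base level before barring, whereas you bar first and do the rearrangement at the symmetric-power level; the content is the same.
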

\begin{proof} For \#1,
$$U^2=\delta A D^{-1/2}\delta A D^{-1/2}=I$$
Now multiply by $\delta^{-1}$ on the left and multiply back by $\delta$ on the right to get
$$(A \delta D^{-1/2})^2=I \ .$$
Now bar and star to get
$$(\overline{ \delta^* D^{-1/2}} \bar A^*)^2=I $$
and replacing $\bar A^*$ with $\Phi$ yields the result.\\

For \#2, start with
$$\delta A D^{-1/2}=D^{-1/2}A^*\delta^* \ .$$
Rearrange to
$$\delta D^{1/2} A= A^* \delta^* D^{1/2} \ .$$
Now bar to get
$$\bar\delta\, \overline{D^{1/2}} \bar A= \overline{A^*} \bar\delta^* \overline{D^{1/2}} \ .$$
In terms of $\Phi$, this is
$$\bar\delta\, \overline{D^{1/2}} \Phi^*= B^{-1}\Phi B \bar\delta^* \overline{D^{1/2}} \ .$$
Multiplying through by $B$ on the left yields
$$\bar\delta\, \overline{D^{1/2}} B\Phi^*= \Phi B \bar\delta^* \overline{D^{1/2}} $$
as required.
\end{proof}
\begin{example}Here's an example which illustrates the various quantities involved.
Starting with $\v=(1,2 i)^T$, form the reflection 
$$U=R_{\v}=\left(\begin{matrix}- \frac{3}{5} & - \frac{4 i}{5} \vstrut\\
\frac{4 i}{5} & \frac{3}{5}\end{matrix}\right) \ .$$
Factoring out $\delta=\left(\begin{matrix}- \frac{3}{5} & 0\\0 & \frac{4 i}{5}\end{matrix}\right)$, we take
$D=\left(\begin{matrix}1 & 0\\0 & 36\end{matrix}\right)$
so that
$$A=\delta^{-1}U\sqrt{D}=\left(\begin{matrix}1 & 8 i\\1 & - \frac{9 i}{2}\end{matrix}\right)\qquad\hbox{and}\qquad
 \p=\left(\begin{matrix}\frac{9}{25} & 0\\0 & \frac{16}{25}\end{matrix}\right) \ .$$
In degree 3, 
$$\Phi=\left(\begin{matrix}1 & 1 & 1 & 1\\- 24 i & - \frac{23 i}{2} & i & \frac{27 i}{2}\\-192 & 8 & \frac{207}{4} & - \frac{243}{4} \vstrut\\
512 i & - 288 i & 162 i & - \frac{729 i}{8} \vstrut\end{matrix}\right)$$
with $B=\diag{1,3,3,1}$.\\

We have the involution
$$ \overline{\delta^* D^{-1/2}}\,\Phi=\left(\begin{matrix}- \frac{27}{125} & - \frac{27}{125} & - \frac{27}{125} & - \frac{27}{125} \vstrut\\
- \frac{144}{125} & - \frac{69}{125} & \frac{6}{125} & \frac{81}{125} \vstrut\\
- \frac{256}{125} & \frac{32}{375} & \frac{69}{125} & - \frac{81}{125} \vstrut\\
- \frac{4096}{3375} & \frac{256}{375} & - \frac{48}{125} & \frac{27}{125} \vstrut\end{matrix}\right)$$
and the self-adjoint matrix
$$\Phi B \bar\delta^*\overline{D^{1/2}}=\left(\begin{matrix}- \frac{27}{125} & - \frac{648 i}{125} & \frac{5184}{125} & \frac{13824 i}{125} \vstrut\\
\frac{648 i}{125} & - \frac{7452}{125} & \frac{5184 i}{125} & - \frac{186624}{125} \vstrut\\
\frac{5184}{125} & - \frac{5184 i}{125} & \frac{268272}{125} & - \frac{839808 i}{125} \vstrut\\
- \frac{13824 i}{125} & - \frac{186624}{125} & \frac{839808 i}{125} & \frac{1259712}{125} \vstrut\end{matrix}\right) \ .$$
\end{example}
\end{subsection}

\begin{subsection}{Symmetric binomial}
Let us look at the binomial case with equal probabilities. This is the basic  case
for the original version of Krawtchouk's polynomials.\\

We start with the orthogonal matrix
$\displaystyle U=\begin{pmatrix}1/\sqrt{2}&1/\sqrt{2}\\ 1/\sqrt{2}&-1/\sqrt{2}\end{pmatrix}$
with
$$\p={\begin{pmatrix}1/2&0\\ 0&1/2\end{pmatrix}} \quad\text{and}\quad
A=\begin{pmatrix}1&1\\ 1&-1\end{pmatrix}\ .$$
satisfying $A^* \p A=I$.\\

Recalling Example \ref{ex:binom}, consider $N=4$ from which we will see the pattern for general $N$. We have the recurrence relations
\begin{align*}
\begin{pmatrix} 0&1&0&0&0\\ 4&0&2&0&0\\ 0&3&0&3&0\\ 0&0&2&0&4\\ 0&0&0&1&0\end{pmatrix}
&\begin{pmatrix} 1&1&1&1&1\\ 4&2&0&-2&-4\\ 6&0&-2&0&6\\ 4&-2&0&2&-4\\ 1&-1&1&-1&1\end{pmatrix}\\
&=
\begin{pmatrix} 1&1&1&1&1\\ 4&2&0&-2&-4\\ 6&0&-2&0&6\\ 4&-2&0&2&-4\\ 1&-1&1&-1&1\end{pmatrix}
\begin{pmatrix} 4&0&0&0&0\\ 0&2&0&0&0\\ 0&0&0&0&0\\ 0&0&0&-2&0\\ 0&0&0&0&-4\end{pmatrix} \ .
\end{align*}

With $j$ the column index, the spectrum has the form $x=N-2j$. Denoting by $K_n$ the rows of the Krawtchouk	 matrix, i.e.,
these are the Krawtchouk polynomials when expressed as functions of $x$, we read off the recurrence relations
$$(N-(n-1))K_{n-1}+(n+1)K_{n+1}=xK_n$$
with initial conditions $K_0=1$, $K_1=x$. The polynomials may be expressed in terms of either $x$ or $j$. For example
\begin{align*}
K_1=x \ ,&&&K_1=N-2j\\
K_2=\frac12(x^2-N)\ ,&&&K_2=\frac12(4j^2-4j+N^2-N)\\
K_3=\frac16(x^3-3xN-2x)\ ,&&& \hbox{\ldots}
\end{align*}

Recalling $\bar\Lambda$ from Example \ref{ex:binom} we see the form for the generating function
$$(1+v)^{N-j}(1-v)^j=\sum_n v^n K_n(j)$$
via the Columns Theorem.\\

In this context $U$ is a reflection, with $p=(1/2)I$, $D=I$. We have, specializing from \S \ref{sec:ref},
$$\Phi^2=2^N\,I \qquad\hbox{and}\qquad (\Phi B)^*= \Phi B$$
involutive and symmetry properties respectively.
\end{subsection}

\end{section}

\begin{section}{Conclusion}
Here we have presented the approach to multivariate Krawtchouk polynomials using matrices, working directly
with the values they take. With this approach the algebraic structures are clearly seen.\\

In KG-Systems II, we present the analytical side. In particular, the KG-systems as
Bernoulli systems is presented showing how these are discrete quantum systems. The Fock space structure
and observables are discussed in detail.\\

We conclude with the observation that there are many unanswered questions involving Krawtchouk matrices, for example,
the behavior of the largest eigenvalue for the symmetric Krawtchouk matrices in the classical case is an interesting
open problem \cite{FF}.\\

\begin{remark} Symbolic computations have been done using IPython.
Some IPython notebooks for computations and examples are available at:
\begin{quote}\texttt{ http://ziglilucien.github.io/NOTEBOOKS }\end{quote}
which will be updated as that project develops.
\end{remark}

Finally, as these are finite quantum systems, possible connections with quantum computation are intriguing.
\end{section}

\begin{section}{Appendix}
\begin{theorem} Basic properties of the $\Gamma$-map for the symmetric representation.\\
\\
The first two properties show that $\Gamma$ is a linear map. The third shows that it is a Lie homomorphism.\\

1. For scalar $\lambda$, $\Gamma(\lambda X)=\lambda \Gamma(X)$.\\

2. Additivity holds: $\Gamma(X+Y)=\Gamma(X)+\Gamma(Y)$.\\

3. With $[X,Y]=XY-YX$ denoting the commutator of $X$ and $Y$, we have
$$\Gamma([X,Y])=[\Gamma(X),\Gamma(Y)]\ .$$
\end{theorem}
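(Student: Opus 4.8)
The plan is to read off the linearity statements 1 and 2 from the explicit formula for $\Gamma$ obtained above, and then to deduce the Lie homomorphism property 3 from the defining relation $\overline{e^{tX}}=e^{t\Gamma(X)}$ together with the multiplicative homomorphism $\overline{R_1R_2}=\bar R_1\bar R_2$ of Proposition~\ref{P:hom}. For 1 and 2 I would invoke the entrywise description: for fixed multi-indices $m,n$ with $|m|=|n|=N$, the entry $\Gamma(g)_{mn}$ is the sum of the terms $m_i\,g_{ij}$ over the pairs $(i,j)$ with $n=m-e_i+e_j$, and is $0$ when there is no such pair. Hence each entry of $\Gamma(g)$ is a fixed linear form in the entries of $g$ with coefficients depending only on $m$ and $n$, so $\Gamma(\lambda X)=\lambda\Gamma(X)$ and $\Gamma(X+Y)=\Gamma(X)+\Gamma(Y)$ follow at once (homogeneity is equally clear from $\Gamma(X)=\frac{\d}{\d t}\big|_{0}\overline{I+tX}$ by rescaling $t$). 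I would then record the consequence that $\Gamma\colon\mathfrak{gl}(d+1)\to\mathfrak{gl}(\nu)$, $\nu=\binom{N+d}{N}$, is a linear, hence continuous, map between finite-dimensional spaces; this is what licenses differentiating $s\mapsto\Gamma(\gamma(s))$ term by term below.

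For 3 I would first establish the conjugation identity
$$\Gamma\bigl(e^{sX}Ye^{-sX}\bigr)=e^{s\Gamma(X)}\,\Gamma(Y)\,e^{-s\Gamma(X)}.$$
The route: start from $e^{sX}e^{tY}e^{-sX}=\exp\bigl(t\,e^{sX}Ye^{-sX}\bigr)$ and apply the bar map to both sides. The bar map sends $e^{sX}e^{tY}e^{-sX}$ to $e^{s\Gamma(X)}e^{t\Gamma(Y)}e^{-s\Gamma(X)}$ (by Proposition~\ref{P:hom} and $\overline{e^{tZ}}=e^{t\Gamma(Z)}$ on each invertible factor) and sends $\exp\bigl(t\,e^{sX}Ye^{-sX}\bigr)$ to $\exp\bigl(t\,\Gamma(e^{sX}Ye^{-sX})\bigr)$; equating these, noting also $e^{s\Gamma(X)}e^{t\Gamma(Y)}e^{-s\Gamma(X)}=\exp\bigl(t\,e^{s\Gamma(X)}\Gamma(Y)e^{-s\Gamma(X)}\bigr)$, and differentiating in $t$ at $t=0$ gives the displayed identity. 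Differentiating the conjugation identity in $s$ at $s=0$ then completes the proof: on the left, linearity of $\Gamma$ moves the derivative inside, and since $\tfrac{\d}{\d s}\big|_{0}e^{sX}Ye^{-sX}=XY-YX$ we obtain $\Gamma([X,Y])$; on the right we obtain $\Gamma(X)\Gamma(Y)-\Gamma(Y)\Gamma(X)=[\Gamma(X),\Gamma(Y)]$.

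I expect the only genuinely delicate part to be the bookkeeping in the conjugation step — keeping the bar map applied only to invertible matrices so that Proposition~\ref{P:hom} is in force, and justifying the passage from the exponentiated identity back to the Lie-algebra identity (legitimate because $e^{sX}Ye^{-sX}\in\mathfrak{gl}(d+1)$ and a one-parameter group $t\mapsto e^{tW}$ determines $W$). Everything else is routine. As an alternative for 3 one could instead compute $[\Gamma(X),\Gamma(Y)]_{mn}$ and $\Gamma([X,Y])_{mn}$ directly from the entrywise formula and match them, but that is more index-heavy, and I would keep the conjugation argument as the main line.
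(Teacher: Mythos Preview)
Your proof is correct. For property~3 you follow the same conjugation strategy as the paper: both arguments bar the identity $e^{sX}e^{tY}e^{-sX}=\exp(t\,e^{sX}Ye^{-sX})$, use multiplicativity to reach the $\Gamma$-level, then differentiate twice. The only cosmetic difference is that you first establish the clean identity $\Gamma(e^{sX}Ye^{-sX})=e^{s\Gamma(X)}\Gamma(Y)e^{-s\Gamma(X)}$ and then differentiate once in $s$, whereas the paper expands $e^{tX}Ye^{-tX}=Y+t[X,Y]+\mathcal{O}(t^2)$ first, applies linearity of $\Gamma$ inside the exponent, and then differentiates in $s$ and $t$ successively.

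Where you genuinely diverge from the paper is in properties~1 and~2. You read linearity off directly from the entrywise formula $\Gamma(g)_{mn}=\sum_{(i,j):\,n=m-e_i+e_j} m_i\,g_{ij}$, which is immediate once that formula is in hand. The paper instead argues from the defining relation $\overline{e^{tX}}=e^{t\Gamma(X)}$: homogeneity comes from rescaling $t$, and additivity is obtained via the Trotter product formula $\overline{e^{t(X+Y)}}=\lim_n\bigl(\overline{e^{(t/n)X}}\,\overline{e^{(t/n)Y}}\bigr)^n$. Your route is more elementary and avoids the analytic input of Trotter; the paper's route has the virtue of working purely from the group-level characterization of $\Gamma$, without needing the explicit matrix-element computation. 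Both are valid, and in fact the paper's proof of property~3 silently relies on linearity when it writes $\Gamma(Y+t[X,Y]+\mathcal{O}(t^2))=\Gamma(Y)+t\Gamma([X,Y])+\mathcal{O}(t^2)$, which is exactly the continuity you flagged.
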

\begin{proof} For Property \#1 start from the definition:
$$\overline{e^{t\lambda X}}=e^{\lambda t\Gamma(X)}=e^{t\Gamma(\lambda X)}$$
and differentiating with respect to $t$ at 0 gives \#1.\\

For \#2, 
$$\overline{e^{t(X+Y)}}=e^{t\Gamma(X+Y)}$$
by definition. Now we use multiplicativity and the Trotter product formula as follows:
\begin{align*}
\overline{e^{t(X+Y)}}&=\lim_{n\to\infty}\overline{\left(e^{(t/n)X}e^{(t/n)Y}\right)^n}\\
&=\lim_{n\to\infty}\left(\overline{e^{(t/n)X}}\ \overline{e^{(t/n)Y}}\right)^n\\
&=\lim_{n\to\infty}\left(e^{(t/n)\Gamma(X)}\ e^{(t/n)\Gamma(Y)}\right)^n\\
&=e^{t(\Gamma(X)+\Gamma(Y))}
\end{align*}
and differentiating with respect to $t$ at 0 yields the result.\\

For \#3 we use  the adjoint representation, with $({\rm ad\,}X)Y=[X,Y]$,
$$e^{tX}Ye^{-tX}=e^{t \,{\rm ad\,}X} Y=Y+t[X,Y]+\frac{t^2}{2}\,[X,[X,Y]]+\cdots+\frac{t^n}{n!}({\rm ad\,}X)^nY+\cdots$$
Write this as
$$e^{tX}Ye^{-tX}=Y+t[X,Y]+\mathcal{O}(t^2)$$
Then
$$\overline{e^{tX}e^{sY}e^{-tX}}=\overline{e^{s(Y+t[X,Y]+\mathcal{O}(t^2))}}$$
By multiplicativity	and linearity we have
$$e^{t\Gamma(X)}e^{s\Gamma(Y)}e^{-t\Gamma(X)}=e^{s\Gamma(Y)+st\Gamma([X,Y])+s\mathcal{O}(t^2)}$$
Now, differentiating with respect to $s$ at 0 yields
$$e^{t\Gamma(X)}\Gamma(Y)e^{-t\Gamma(X)}=\Gamma(Y)+t\Gamma([X,Y])+\mathcal{O}(t^2)$$
Differentiating with respect to $t$ at 0 finishes the proof.
\end{proof}
\end{section}

\begin{paragraph}{\bf Acknowledgment.}
The author would like to thank the organizers of the $33^{\rm nd}$ Quantum Probability conference, Luminy, 2012, where
a portion of this work was presented.
\end{paragraph}

\end{document}